\theoremstyle{definition}
\newtheorem{thm}{Theorem}[section]
\newtheorem{dfn}[thm]{Definition}
\newtheorem{lemma}[thm]{Lemma}
\newtheorem{cor}[thm]{Corollary}
\newtheorem{prop}[thm]{Proposition}
\newtheorem{ex}[thm]{Example}
\newtheorem{rmk}[thm]{Remark}
\def\ann{\mathrm{Ann}}
\def\endo{\mathrm{End}}
\def\img{\mathrm{Im}}
\def\id{\mathrm{id}}
\def\tr{\mathrm{Tr}}
\def\ccc{\mathbb{C}}
\def\zz{\mathbb{Z}}
\def\rr{\mathbb{R}}
\def\pp{\mathbb{P}}
\def\cle{\mathcal{E}}
\def\clh{\mathcal{H}}
\def\cll{\mathcal{L}}
\def\clo{\mathcal{O}}
\def\pt{\partial}
\def\p{\partial}
\def\ud{\mathrm{d}}
\def\vol{\mathrm{vol}}
\def\spann{\mathrm{Span}}
\def\bea{\begin{eqnarray}}
\def\eea{\end{eqnarray}}
\def\<{{\langle}}
\def\>{{\rangle}}
\begin{document}

\title{The degeneration of Calabi-Yau 3-folds via 3-forms and the SYZ conjecture}
\author{Teng Fei}

\date{}

\maketitle{}

\begin{abstract}
In this paper, we investigate the geometries associated with 3-forms of various orbital types on a symplectic 6-manifold. We demonstrate that certain unstable 3-forms, which naturally emerge from specific degenerations of Calabi-Yau structures, exhibit remarkably rich geometric properties. This, in turn, offers a novel perspective on the SYZ conjecture.
\end{abstract}

\section{Introduction}

The classification of 3-forms in real dimension six has been well-established \cite{reichel1907, gurevich1935b, gurevich1964, chan1998, bryant2006b}. A notable feature in this dimension is the existence of two open (also known as stable) orbits under the natural action of the general linear group. This fact has been utilized by many authors, including Hitchin \cite{hitchin2000} and Donaldson-Lehmann \cite{donaldson2024,donaldson2024b,donaldson2024c}, to investigate geometric and deformation problems in complex and Calabi-Yau geometries.

In this paper, we systematically investigate various geometric structures associated with 3-forms on a 6-dimensional symplectic manifold, including those from unstable orbits under the general linear group action. We introduce several natural integrability conditions on 3-forms and explore the relationships among them. In particular, we focus on the geometry of certain unstable 3-forms that naturally arise from the degeneration of Calabi-Yau structures. We demonstrate that such geometries possess a canonical Lagrangian foliation, which emerges as a consequence of the degeneration of complex structures. Moreover, we prove that the leaves of this foliation naturally admit a pair of dual Hessian structures, where the local potentials of the metric automatically satisfy a real Monge-Amp\`ere equation. When the Lagrangian foliation is a Lagrangian torus fibration, we further show that the fibration is automatically semi-flat, and the base of the fibration is naturally equipped with an $L^2$-metric and a pair of dual Hessian structures. Additionally, we provide explicit examples of this type of Calabi-Yau 3-fold degeneration.

This paper is organized as follows. In Section 2, we review the classification of 3-forms in dimension six under both the general linear group and the symplectic group actions. We introduce homogeneous equivariant polynomials to describe the orbits of 3-forms. Section 3 is dedicated to integrability conditions on 3-forms. In particular, we introduce the concept of $F$-harmonic 3-forms as an ideal integrability condition. In Section 4, we introduce the notion of wonderful degenerations of Calabi-Yau structures as a convenient set-up for our discussion. Section 5 is devoted to investigation of the geometry associated with an $F$-harmonic 3-form in the orbit $\clo_0^+$. Such 3-forms naturally arise from certain degenerations of Calabi-Yau structures and induce remarkably rich geometric structures on the underlying symplectic manifold. These include a canonical Lagrangian foliation, where the leaves are equipped with a pair of dual Hessian structures and a parallel volume form. Furthermore, when the Lagrangian foliation is a torus fibration, we show that almost all the geometric predictions of the SYZ conjecture are automatically satisfied. Finally in Section 6, we present a non-trivial example of wonderful degeneration and its associated SYZ fibration.\\

\noindent \textbf {Acknowledgment: }The author would like to thank Robert Bryant, Tristan Collins, Yu-Shen Lin, Duong H. Phong, Li-Sheng Tseng and Xiangwen Zhang for valuable discussions. This work is generously supported by the Simons Collaboration Grant 830816 and the NSF LEAPS Grant 2418918.  

\section{Classification of three-forms}

In this section, we establish a collection of basic facts about the classification of 3-forms on a 6-dimensional real vector space. A few of the statements presented here are probably new. However, due to their elementary nature, we leave the proof as exercises for the interested readers.\\

Let $V$ be a real vector space of dimension $6$. We denote by $\bigwedge^3V^*$ the space of 3-forms on $V$. The group $\mathrm{GL}(V)$ naturally acts on $\bigwedge^3V^*$. This action is ``stable'' in the sense that it has open orbits. In fact, all the orbits of this action have been classified classically, see for example \cite{reichel1907, gurevich1935b, gurevich1964, chan1998, bryant2006b}.

To better describe these orbits, we introduce the following definitions.
\begin{dfn}
A 3-form $\varphi\in\bigwedge^3V^*$ is called \emph{stable} if it lies in an open orbit under the $\mathrm{GL}(V)$-action.
\end{dfn}
For any 3-form $\varphi\in\bigwedge^3V^*$, its kernel $\ker\varphi$ is defined to be
\[\ker\varphi=\{v\in V:\iota_v\varphi=0\}.\]
\begin{dfn}
A 3-form $\varphi$ is called \emph{non-degenerate} if $\ker\varphi=\{0\}$.
\end{dfn}
As $\varphi$ induces a non-degenerate 3-form $\bar\varphi\in\bigwedge^3(V/\ker\varphi)^*$, it is easy to see that the only possible dimensions for $\ker\varphi$ are $0$, $1$, $3$, and $6$.

For later use, we introduce several equivariant homogeneous polynomials on the space $\bigwedge^3V^*$:
\begin{enumerate}
\item $K$ is an equivariant homogeneous degree 2 polynomial of $\varphi$ such that $K(\varphi)\in\endo~V\otimes\bigwedge^6V^*$ for any $\varphi$. It is  defined by 
    \[K(\varphi)(v)=-\iota_v\varphi\wedge\varphi\in\bigwedge\nolimits^5V^*\cong V\otimes\bigwedge\nolimits^6V^*\] for any $v\in V$.
\item $F$ is an equivariant homogeneous degree 3 polynomial of $\varphi$ such that $F(\varphi)\in\bigwedge^3V^*\otimes\bigwedge^6V^*$ for any $\varphi$. It is defined by
    \[F(\varphi)(v_1,v_2,v_3)=-2\varphi(K(\varphi)(v_1),v_2,v_3)\] for any $v_1,v_2,v_3\in V$. Modulo tensoring the one dimensional space $\bigwedge^6V^*$, $F(\varphi)$ is also a 3-form. Therefore it makes sense to talk about $\ker F(\varphi)$. By this definition, we know that
    \[\ker F(\varphi)=K(\varphi)^{-1}(\ker\varphi\otimes\bigwedge\nolimits^6V^*).\]
\item $Q$ is an equivariant homogeneous degree 4 polynomial of $\varphi$ such that $Q(\varphi)\in(\bigwedge^6V^*)^{\otimes 2}$ for any $\varphi$. It is defined by
    \[Q(\varphi)=-\varphi\wedge F(\varphi).\]
\end{enumerate}
\begin{dfn}
An element of $(\bigwedge^6V^*)^{\otimes 2}$ is called \emph{positive} if it can be written as $\tau^2$ for some $0\neq \tau\in\bigwedge^6V^*$. An element of $(\bigwedge^6V^*)^{\otimes 2}$ is called \emph{negative} if its negative is positive.
\end{dfn}

It has been known for a long time that the $\mathrm{GL}(V)$-action on $\bigwedge^3V^*$ has 6 distinct orbits. Two of them are stable, which are exactly the open sets defined by $Q(\varphi)<0$ and $Q(\varphi)>0$ respectively. These two open orbits will be denoted by $\clo_-$ and $\clo_+$ throughout this paper. The union of the other 4 orbits is the hypersurface $Q(\varphi)=0$. In fact, these 4 orbits can be distinguished by the dimension of their kernels. As a consequence, we shall denote these orbits by $\clo_0$, $\clo_1$, $\clo_3$, $\clo_6=\{0\}$, where the subscript labels the dimension of their kernel spaces.

Regarding these orbits, we have the following table:
\[\begin{tabular}{c|c|l}
\hline
Orbit & Dimesion & Normal Form \\
\hline
$\clo_-$ & 20 & $e^{135}-e^{146}-e^{236}-e^{245}$ \\
\hline
$\clo_+$ & 20 & $e^{123}+e^{456}$ \\
\hline
$\clo_0$ & 19 & $e^{146}+e^{236}+e^{245}$ \\
\hline
$\clo_1$ & 15 & $e^{135}+e^{245}$ \\
\hline
$\clo_3$ & 10 & $e^{135}$ \\
\hline
$\clo_6=\{0\}$ & 0 & 0 \\
\hline
\end{tabular}\]

\begin{dfn}
For $\varphi\in\bigwedge^3V^*$, the space $\ann~\varphi$ is defined to be
\[\ann~\varphi:=\{\alpha\in V^*:\alpha\wedge\varphi=0\}.\]
Dually we define
\[(\ann~\varphi)^\perp:=\{v\in V:\alpha(v)=0\textrm{ for any }\alpha\in\ann~\varphi\}.\]
\end{dfn}
Noticing that $K(\varphi)$ is a linear map from $V$ to $V\otimes\bigwedge^6V^*$ and that $V\cong V\otimes\bigwedge^6V^*$ canonically up to scaling, we may view both $\ker K(\varphi)$ and $\img~K(\varphi)$ as subspaces of $V$. Therefore associated with each $\varphi\in\bigwedge^3V^*$, there are four natural subspaces of $V$: $\ker\varphi$, $\ker K(\varphi)$, $\img~K(\varphi)$, $(\ann~\varphi)^\perp$ and one subspace of $V^*$: $\ann~\varphi$. Regarding dimensions of these vector spaces, it is easy to obtain the following table using the normal forms above.
\[\begin{tabular}{|c|c|c|c|c|}
\hline
Orbit & $\dim\ker\varphi$ & $\dim\ker K(\varphi)$ & $\dim\img~K(\varphi)$ & $\dim~(\ann~\varphi)^\perp$ \\
\hline
$\clo_-$ & 0 & 0 & 6 & 6 \\
\hline
$\clo_+$ & 0 & 0 & 6 & 6 \\
\hline
$\clo_0$ & 0 & 3 & 3 & 6 \\
\hline
$\clo_1$ & 1 & 5 & 1 & 5 \\
\hline
$\clo_3$ & 3 & 6 & 0 & 3 \\
\hline
$\clo_6=\{0\}$ & 6 & 6 & 0 & 0 \\
\hline
\end{tabular}\]
Moreover, one can check case by case that if any two spaces in the same line has the same dimension, then they are equal. Incorporating the fact that $F(\varphi)=0$ if and only if $\img~K(\varphi)\subset\ker\varphi$, we get the following stratification result:
\begin{prop}
The zeroes of the polynomials $K$, $F$, and $Q$ can be characterized as follows:
\begin{enumerate}
\item $\{\varphi:\varphi=0\}=\clo_6$,
\item $\{\varphi:K(\varphi)=0\}=\clo_3\coprod\clo_6$,
\item $\{\varphi:F(\varphi)=0\}=\clo_1\coprod\clo_3\coprod\clo_6$,
\item $\{\varphi:Q(\varphi)=0\}=\clo_0\coprod\clo_1\coprod\clo_3\coprod\clo_6$.
\end{enumerate}
\end{prop}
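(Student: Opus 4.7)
The plan is to exploit the $\mathrm{GL}(V)$-equivariance of $K$, $F$, and $Q$ together with the dimension table preceding the proposition. Since each of these polynomials takes values in a $\mathrm{GL}(V)$-representation, their zero loci are $\mathrm{GL}(V)$-invariant subsets of $\bigwedge^3V^*$, hence unions of orbits. Furthermore, the defining formulas yield the chain of implications
\[
\varphi=0\ \Longrightarrow\ K(\varphi)=0\ \Longrightarrow\ F(\varphi)=0\ \Longrightarrow\ Q(\varphi)=0,
\]
so the four vanishing loci are nested. It therefore suffices to determine, for each of the six orbits, which of these four conditions are satisfied.

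Part (a) is the tautology $\clo_6=\{0\}$. Part (b) is read off directly from the dimension table: $K(\varphi)=0$ is equivalent to $\img K(\varphi)=0$, and this holds precisely on $\clo_3$ and $\clo_6$. Part (c) invokes the identity $\ker F(\varphi)=K(\varphi)^{-1}(\ker\varphi\otimes\bigwedge^6V^*)$ recorded in the excerpt, equivalently $F(\varphi)=0\Longleftrightarrow\img K(\varphi)\subseteq\ker\varphi$. On $\clo_-$, $\clo_+$, and $\clo_0$ the image of $K$ has positive dimension while $\ker\varphi$ is trivial, so the containment fails. On $\clo_1$ both subspaces have dimension $1$, and by the remark that any two subspaces in the same row of the table with equal dimension must coincide, the containment in fact holds. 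On $\clo_3$ and $\clo_6$ the containment is trivial. This gives $\{F=0\}=\clo_1\sqcup\clo_3\sqcup\clo_6$.

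For (d) the inclusion $\{F=0\}\subseteq\{Q=0\}$ is immediate from $Q=-\varphi\wedge F(\varphi)$, so what remains is to check that $Q=0$ on $\clo_0$ and $Q\neq0$ on $\clo_\pm$. I would verify both by direct computation on the tabulated normal forms. For $\varphi=e^{146}+e^{236}+e^{245}\in\clo_0$, evaluating $K$ on each basis vector shows $\img K(\varphi)=\spann\{e_1,e_3,e_5\}$, from which one finds that $F(\varphi)$ is a non-zero multiple of $e^{246}\otimes e^{123456}$; since every monomial of $\varphi$ shares an index with $e^{246}$, the product $\varphi\wedge F(\varphi)$ vanishes. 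For $\varphi=e^{123}+e^{456}\in\clo_+$, the same procedure yields $K(\varphi)$ equal to the diagonal endomorphism with eigenvalues $(-1,-1,-1,+1,+1,+1)$ (tensored with the volume), whence $F(\varphi)=2(e^{123}-e^{456})\otimes e^{123456}$ and $Q(\varphi)=4(e^{123456})^{\otimes 2}$, which is positive in the sense of the definition; the parallel computation on $\clo_-$ produces a negative value for $Q$.

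The only real difficulty lies in the sign bookkeeping in this last paragraph: one must implement the identification $\bigwedge^5V^*\cong V\otimes\bigwedge^6V^*$ consistently and track signs through the contraction and wedge operations. Once that is done, the proposition follows cleanly from the dimension table together with one normal-form computation on $\clo_0$ and one on each of $\clo_\pm$.
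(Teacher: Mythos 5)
Your proposal is correct and follows essentially the same route as the paper, which derives the proposition from the $\mathrm{GL}(V)$-equivariance of $K$, $F$, $Q$, the dimension table (including the observation that equal-dimensional spaces in a given row coincide), the equivalence $F(\varphi)=0\Leftrightarrow\img K(\varphi)\subseteq\ker\varphi$, and normal-form computations; your explicit evaluations on the normal forms of $\clo_0$ and $\clo_\pm$ agree with the quantities recorded in the paper (e.g.\ $\img K(\varphi)=\spann\{e_1,e_3,e_5\}$ and $F(\varphi)$ proportional to $e^{246}$ on $\clo_0$).
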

In addition, the polynomials $K$, $F$ and $Q$ satisfy the following equations:
\begin{prop}~
\begin{enumerate}
\item $K(\varphi)\circ K(\varphi)=\dfrac{\id_V}{4}\cdot Q(\varphi)\in\endo~V\otimes(\bigwedge^6V^*)^{\otimes2}$,
\item $K(F(\varphi))=-K(\varphi)\cdot Q(\varphi)\in\endo~V\otimes(\bigwedge^6V^*)^{\otimes3}$,
\item $F(F(\varphi))=-\varphi\cdot Q^2(\varphi)\in\bigwedge^3V^*\otimes(\bigwedge^6V^*)^{\otimes4}$.
\end{enumerate}
\end{prop}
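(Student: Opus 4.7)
The plan is to exploit the $\mathrm{GL}(V)$-equivariance of each identity together with the Zariski density of the union $\clo_-\cup\clo_+=\{\varphi:Q(\varphi)\neq 0\}$ in $\bigwedge^3V^*$. Both sides of each of the three identities are polynomial maps in $\varphi$, and two polynomial maps that agree on a Zariski-dense subset must agree everywhere; by equivariance it then suffices to verify each identity at one representative from each of the two open orbits.

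For $\clo_+$ I would take the normal form $\varphi=e^{123}+e^{456}$ and set $\Omega:=e^{123456}$. A direct computation of $-\iota_{e_i}\varphi\wedge\varphi$ for $i=1,\dots,6$, combined with the standard identification $\bigwedge^5V^*\cong V\otimes\bigwedge^6V^*$, shows that $K(\varphi)$ acts as $\mathrm{diag}(-1,-1,-1,+1,+1,+1)\otimes\Omega$ on $e_1,\dots,e_6$. One then reads off $F(\varphi)=2(e^{123}-e^{456})\otimes\Omega$ and $Q(\varphi)=4\,\Omega^{\otimes 2}$. All three identities fall out directly: $K(\varphi)\circ K(\varphi)=\id_V\otimes\Omega^{\otimes 2}=\tfrac14\id_V\cdot Q(\varphi)$; writing $\psi:=2(e^{123}-e^{456})$, a parallel computation gives $K(\psi)=-4K(\varphi)$, and hence $K(F(\varphi))=K(\psi)\otimes\Omega^{\otimes 2}=-K(\varphi)\cdot Q(\varphi)$; one further application of $F$ to $\psi$ yields $F(\psi)=-16\,\varphi\otimes\Omega$, whence $F(F(\varphi))=-16\,\varphi\otimes\Omega^{\otimes 4}=-\varphi\cdot Q(\varphi)^2$.

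For $\clo_-$, the normal form $\varphi=e^{135}-e^{146}-e^{236}-e^{245}$ is the real part of $\Omega_{\mathbb C}:=dz^1\wedge dz^2\wedge dz^3$ under the complex structure $z^j=x^{2j-1}+ix^{2j}$. A parallel calculation identifies $K(\varphi)$ with $2J\otimes\Omega$, where $J$ is the associated almost complex structure on $V$, and gives $F(\varphi)=4\,\im\Omega_{\mathbb C}\otimes\Omega$. Since $J^2=-\id_V$, we obtain $K(\varphi)\circ K(\varphi)=-4\,\id_V\otimes\Omega^{\otimes 2}$; the standard identity $\Omega_{\mathbb C}\wedge\bar\Omega_{\mathbb C}=-8i\,\Omega$ yields $\re\Omega_{\mathbb C}\wedge\im\Omega_{\mathbb C}=4\,\Omega$ and hence $Q(\varphi)=-16\,\Omega^{\otimes 2}$, confirming (a). Identities (b) and (c) reduce similarly to the observation that the operator $\alpha\mapsto\alpha(J\cdot,\cdot,\cdot)$ sends $\re\Omega_{\mathbb C}$ to $-\im\Omega_{\mathbb C}$ and $\im\Omega_{\mathbb C}$ back to $\re\Omega_{\mathbb C}$, so that iterating it twice returns $-\varphi$.

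The only real difficulty I expect is the bookkeeping of signs, especially in tracking the isomorphism $\bigwedge^5V^*\cong V\otimes\bigwedge^6V^*$ that is implicit in viewing $K(\varphi)$ as an endomorphism-valued object. A more conceptual presentation, avoiding the orbit-by-orbit case analysis, would invoke the well-known identification of a stable 3-form with either an almost complex structure together with a holomorphic volume form (for $\clo_-$) or a splitting $V=V_+\oplus V_-$ with a volume form on each summand (for $\clo_+$). In that language $K(\varphi)$ is the associated (para-)complex structure, $F(\varphi)$ is Hitchin's dual form $\hat\varphi$, and $Q(\varphi)$ is the square of Hitchin's volume functional, making all three identities standard features of those structures.
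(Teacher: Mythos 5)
Your proposal is correct and follows essentially the same route as the paper: both reduce the polynomial identities to a verification on an open (hence Zariski-dense) orbit and then check them there using the normal form / Hitchin's structure theory. The only difference is that you verify on both open orbits, whereas a single nonempty Euclidean-open set such as $\clo_-$ already suffices for the density argument, so the $\clo_+$ computation is redundant (though a useful consistency check).
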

\begin{proof}
As these identities are algebraic in nature, we only need to verify them on some open subset of $\bigwedge^3V^*$ such as $\clo_-$, where we can apply Hitchin \cite{hitchin2000}'s result below.
\end{proof}

Most work in the literature focuses on the orbit $\clo_-$ as it is related to complex and Calabi-Yau geometries. As explained in the pioneering work of Hitchin \cite{hitchin2000}, for any $\varphi\in\clo_-$, we have $K(\varphi)\circ K(\varphi)=\id_V\cdot\lambda(\varphi)$ for some $0>\lambda(\varphi)\in(\bigwedge^6V^*)^{\otimes2}$. Let $\sqrt{-\lambda(\varphi)}\in\bigwedge^6V^*$ be either of the two square roots of $-\lambda(\varphi)$, we have that
\[J(\varphi)=\frac{K(\varphi)}{\sqrt{-\lambda(\varphi)}}:V\to V\]
defines a complex structure on $V$, such that the complex valued 3-form $\varphi+\sqrt{-1}\hat\varphi$, where $\hat\varphi=J(\varphi)^*\varphi$, is a nowhere vanishing $(3,0)$-form with respect to $J(\varphi)$. In this setting, we have
\bea\label{relations}
\begin{split}K(\varphi)&=J(\varphi)\cdot\sqrt{-\lambda(\varphi)},\\
F(\varphi)&=2\hat\varphi\cdot\sqrt{-\lambda(\varphi)},\\
Q(\varphi)&=4\lambda(\varphi).\end{split}
\eea
There is a parallel story about the orbit $\clo_+$, which is a ``para-complex'' or ``split'' version of Hitchin's theory, see for example \cite{etayo2006, fei2015b, hamilton2023}.

The cubic polynomial map $F:\bigwedge^3V^*\to\bigwedge^3V^*\otimes(\bigwedge^6V^*)^{\otimes 2}$ is $\mathrm{GL}(V)$-equivariant, therefore it maps orbits to orbits. It is not hard to deduce the following statement.
\begin{prop}\label{Fmap}
The cubic polynomial $F$ satisfies the following:
\begin{enumerate}
\item $F:\clo_\pm\to\clo_{\pm}\otimes(\bigwedge^6V^*)^{\otimes 2}$ is bijective.
\item $F(\clo_0)=\clo_3\otimes(\bigwedge^6V^*)^{\otimes 2}$.
\item $F=0$ on degenerate orbits $\clo_1$, $\clo_3$ and $\clo_6=\{0\}$.
\end{enumerate}
\end{prop}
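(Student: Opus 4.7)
The plan is to exploit the $\Gl(V)$-equivariance of $F$: the map must carry each $\Gl(V)$-orbit either to zero or into a single $\Gl(V)$-orbit of the target space $\bigwedge^3V^*\otimes(\bigwedge^6V^*)^{\otimes 2}$. Part (c) is then immediate from the previous proposition, which characterizes $\{\varphi:F(\varphi)=0\}$ as exactly $\clo_1\coprod\clo_3\coprod\clo_6$.

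For part (b), I would compute the kernel of $F(\varphi)$ when $\varphi\in\clo_0$. Using the identity $\ker F(\varphi)=K(\varphi)^{-1}(\ker\varphi\otimes\bigwedge^6V^*)$ already recorded in the definition of $F$, together with $\ker\varphi=0$ for $\varphi\in\clo_0$, one obtains $\ker F(\varphi)=\ker K(\varphi)$, which has dimension $3$ by the table. Thus $F(\varphi)$ has a $3$-dimensional kernel and lies in $\clo_3\otimes(\bigwedge^6V^*)^{\otimes 2}$; equality $F(\clo_0)=\clo_3\otimes(\bigwedge^6V^*)^{\otimes 2}$ then follows from equivariance, since $\Gl(V)$ acts transitively on $\clo_0$.

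For part (a), I would proceed in three steps. First, show $F(\clo_\pm)\subset\clo_\pm\otimes(\bigwedge^6V^*)^{\otimes 2}$. On $\clo_-$, this is immediate from Hitchin's formula (\ref{relations}): since $F(\varphi)=2\hat\varphi\sqrt{-\lambda(\varphi)}$ with $\hat\varphi=J(\varphi)^*\varphi$ and $J(\varphi)\in\Gl(V)$, the form $\hat\varphi$ lies in the same $\Gl(V)$-orbit $\clo_-$ as $\varphi$; the parallel split/para-complex version of Hitchin's story handles $\clo_+$. Second, prove injectivity: if $F(\varphi_1)=F(\varphi_2)$ with $\varphi_1,\varphi_2\in\clo_\pm$, apply $F$ once more and invoke the earlier identity $F(F(\varphi))=-\varphi\cdot Q^2(\varphi)$ to conclude $\varphi_1\cdot Q^2(\varphi_1)=\varphi_2\cdot Q^2(\varphi_2)$. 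Since $Q\neq 0$ on $\clo_\pm$, this forces $\varphi_1$ and $\varphi_2$ to be proportional in $\bigwedge^3V^*$, and then cubic homogeneity of $F$ pins the proportionality constant to $1$. Third, surjectivity follows from $\Gl(V)$-equivariance once one verifies that $\Gl(V)$ acts transitively on the target $\clo_\pm\otimes(\bigwedge^6V^*)^{\otimes 2}$: after fixing a generator $\mu\in\bigwedge^6V^*$, elements of the target take the form $\varphi\otimes\mu^2$ with $\varphi\in\clo_\pm$, and $g\in\Gl(V)$ acts by $g\cdot(\varphi\otimes\mu^2)=((\det g)^{-2}g\varphi)\otimes\mu^2$; composing the $\Gl(V)$-transitivity on $\clo_\pm$ with a suitable homothety $\lambda\cdot\id_V$ absorbs the $(\det g)^{-2}$ character, giving the required transitivity.

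The main delicacy I anticipate is this last transitivity check, since the scalar factor $(\bigwedge^6V^*)^{\otimes 2}$ is acted on nontrivially by $\Gl(V)$ via the $(\det)^{-2}$ character and must be harmonized with the action on $\clo_\pm$. Once that is in hand, the equivariant surjectivity argument closes the proof cleanly; the rest of the work reduces to unpacking definitions and invoking the two identities from the preceding proposition.
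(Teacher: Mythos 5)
Your proposal is correct, and it supplies precisely the details that the paper leaves as an exercise: the paper offers no written proof, only the remark that $F$ is $\Gl(V)$-equivariant and hence maps orbits to orbits, with the orbit identification implicitly left to a normal-form computation. Your route is the same in spirit but replaces most of the normal-form work with the algebraic identities already on record: part (c) is read off from the stratification $\{F=0\}=\clo_1\coprod\clo_3\coprod\clo_6$; part (b) from $\ker F(\varphi)=K(\varphi)^{-1}(\ker\varphi\otimes\bigwedge^6V^*)$ together with the dimension table (a nonzero 3-form with 3-dimensional kernel is necessarily in $\clo_3$, since it descends to a volume form on the quotient); and injectivity in part (a) from $F(F(\varphi))=-\varphi\cdot Q^2(\varphi)$ with $Q\neq 0$ on $\clo_\pm$ and cubic homogeneity --- this last step is a particularly clean alternative to computing $F$ on normal forms. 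Two small remarks. First, the transitivity-of-$\Gl(V)$ issue on the twisted target, which you rightly flag as the delicate point of part (a), arises equally in part (b); the same fix works there, since the stabilizer of $e^{135}$ contains elements of arbitrary nonzero determinant (and homotheties $\lambda\,\id_V$ act on the twisted space by an odd power of $\lambda$, so every nonzero real scaling is realized). Second, the paper's definition gives $F(\varphi)\in\bigwedge^3V^*\otimes\bigwedge^6V^*$ (consistent with cubic homogeneity and with the displayed computation of $F(\varphi)$ on the normal form), whereas the proposition writes $(\bigwedge^6V^*)^{\otimes 2}$; you inherit this typo, but it only changes the exponent in your homothety computation from $-9$ to $-15$, both odd, so nothing in your argument is affected.
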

It follows that the nontrivial part of $F$ is concentrated in the orbit $\clo_0$. Understanding the map $F:\clo_0\to\clo_3\otimes (\bigwedge^6V^*)^{\otimes 2}$ is equivalent to understanding the stabilizers of $\varphi\in\clo_0$ and $F(\varphi)\in\clo_3\otimes (\bigwedge^6V^*)^{\otimes 2}$ respectively.

For the simplicity of calculation, let us take $\varphi$ to be the normal form in our previous table. In addition, let us rename the vectors in $V^*$ by
\[\ud x^j=e^{2j-1},\quad \ud y^j=e^{2j},\quad\textrm{for }j=1,2,3.\]\
In this set-up, straightforward calculation yields
\[\begin{split}\varphi&=\ud x^1\wedge\ud y^2\wedge\ud y^3+\ud x^2\wedge\ud y^3\wedge\ud y^1+\ud x^3\wedge\ud y^1\wedge\ud y^2,\\
F(\varphi)&=4\ud y^1\wedge\ud y^2\wedge\ud y^3\otimes(\ud x^1\wedge\ud y^1\wedge\ud x^2\wedge\ud y^2\wedge\ud x^3\wedge\ud y^3).
\end{split}\]
Fix the basis $\{\ud x^1,\ud x^2,\ud x^3,\ud y^1,\ud y^2,\ud y^3\}$ of $V^*$ which we shall abbreviate as $\{\ud x,\ud y\}$. We have the following descriptions of the stabilizers of $F(\varphi)$ and $\varphi$.
\begin{prop}~
\begin{enumerate}
\item The stabilizer of $F(\varphi)$ consists of elements taking the matrix form
\[\begin{pmatrix}A & 0\\ B & C\end{pmatrix},\]
where $B$ is arbitrary, $A$ and $C$ are invertible $3\times3$ matrices satisfying $\det A\cdot\det^2C=1$.
\item The stabilizer of $\varphi$ is a subgroup of the stabilizer of $F(\varphi)$ whose elements further satisfy
\[A=\frac{C}{\det C},\textrm{ and }\tr(BC^{-1})=0.\]
\end{enumerate}
We also find that
\[K(\varphi)(\pt_{x^j})=0,\quad K(\varphi)(\pt_{y^j})=-2\pt_{x^j}\otimes(\ud x^1\wedge\ud y^1\wedge\ud x^2\wedge\ud y^2\wedge\ud x^3\wedge\ud y^3).\]
Consequently we have $\ker K(\varphi)=\img~K(\varphi)=\ker F(\varphi)=\spann\{\pt x\}.$
\end{prop}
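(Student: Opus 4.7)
The plan is to verify both stabilizer descriptions by direct computation using the given normal forms, and then to derive the formulas for $K(\varphi)$ from its definition.

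For part (a), I would first observe that $F(\varphi) = 4\,\ud y^1\wedge \ud y^2\wedge \ud y^3 \otimes \Omega$ factorizes as a $3$-form $\psi := 4\,\ud y^1\wedge\ud y^2\wedge\ud y^3$ in the orbit $\clo_3$ (with $\ker\psi = \spann\{\pt_{x^1},\pt_{x^2},\pt_{x^3}\}$) tensored with the standard volume form $\Omega$. Any $g\in\mathrm{GL}(V)$ stabilizing $F(\varphi)$ must preserve $\ker\psi$, which forces the block-triangular matrix shape with $A$ representing the action on $\spann\{\pt_x\}$, $C$ the induced action on the quotient $V/\spann\{\pt_x\}$, and $B$ an unconstrained off-diagonal block. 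Under such a $g$, one has $g^*\psi = \det(C)\,\psi$ and $g^*\Omega = \det(g)\,\Omega = \det(A)\det(C)\,\Omega$, so invariance of the product $\psi\otimes\Omega$ gives the single scalar condition $\det(A)\det(C)^2 = 1$.

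For part (b), equivariance of $F$ gives $\mathrm{Stab}(\varphi)\subseteq\mathrm{Stab}(F(\varphi))$, so it suffices to impose $g^*\varphi = \varphi$ on block-triangular $g$ as above. Writing $\varphi = \sum_j \ud x^j \wedge \beta_j$ with $\beta_j := \iota_{\pt_{y^j}}(\ud y^1\wedge\ud y^2\wedge\ud y^3) \in \bigwedge^2\spann\{\ud y\}$, a short computation yields $g^*\beta_j = \det(C)\sum_i (C^{-1})_{ij}\,\beta_i$, which is the adjugate of $C$ acting on the basis $\{\beta_j\}$. Expanding $g^*\varphi = \sum_j (g^*\ud x^j)\wedge(g^*\beta_j)$ and separating by the decomposition $V^* = \spann\{\ud x\}\oplus\spann\{\ud y\}$ produces two independent pieces: an $\ud x^i\wedge\beta_k$ part with matrix coefficient $\det(C)\cdot C^{-1}A$, and a $\ud y^i\wedge\beta_k$ part that collapses via $\ud y^i\wedge\beta_k = \delta^i_k\,(\ud y^1\wedge\ud y^2\wedge\ud y^3)$ to the single scalar $\det(C)\cdot\tr(C^{-1}B)\,(\ud y^1\wedge\ud y^2\wedge\ud y^3)$. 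Matching against $\varphi$ forces $\det(C)\, C^{-1}A = I$, equivalently $A = C/\det(C)$, and $\tr(C^{-1}B) = 0$; the first condition automatically implies $\det(A)\det(C)^2 = 1$, subsuming the normalization of part (a).

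The formulas for $K(\varphi)$ are then obtained by direct substitution into $K(\varphi)(v) = -\iota_v\varphi\wedge\varphi$. For $v = \pt_{x^j}$, $\iota_v\varphi = \beta_j$ lies in $\bigwedge^2\spann\{\ud y\}$, and $\beta_j\wedge\varphi = 0$ since each summand of $\varphi$ already carries a $2$-form in the $\ud y$'s, pushing the total $\ud y$-degree above three. For $v = \pt_{y^j}$, one expands $\iota_v\varphi$ explicitly as a sum of two $\ud x\wedge\ud y$ monomials, wedges with $\varphi$, and tracks signs; the resulting $5$-form corresponds under the canonical identification $\bigwedge^5V^*\cong V\otimes\bigwedge^6V^*$ to $-2\,\pt_{x^j}\otimes\Omega$ after negation, giving $K(\varphi)(\pt_{y^j}) = -2\,\pt_{x^j}\otimes\Omega$. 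The equalities $\ker K(\varphi) = \img\,K(\varphi) = \ker F(\varphi) = \spann\{\pt_x\}$ then follow immediately from these two computations together with the definition $\ker F(\varphi) = K(\varphi)^{-1}(\ker\varphi \otimes \bigwedge^6 V^*)$ (applied with $\ker\varphi = 0$).

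The main obstacle is the bookkeeping in part (b): one must correctly identify how $g^*$ acts on the basis $\{\beta_j\}$ of $\bigwedge^2\spann\{\ud y\}$ (by the adjugate $\det(C)\,C^{-1}$ of the quotient block) and then cleanly separate the expansion of $g^*\varphi$ into its two independent components. Once this linear-algebraic setup is in place, matching coefficients yields both conditions in essentially one line.
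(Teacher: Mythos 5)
The paper gives no proof of this proposition — Section 2 explicitly leaves these elementary facts "as exercises for the interested readers" — so there is nothing to compare against; your direct computation is correct and is clearly the intended argument. The key steps all check out: preservation of $\ker F(\varphi)=\spann\{\pt_x\}$ forces the block-triangular shape and the scaling of $\psi\otimes\Omega$ gives $\det A\cdot\det^2C=1$; the adjugate action on the $\beta_j$ and the identity $\ud y^i\wedge\beta_k=\delta^i_k\,\ud y^{123}$ yield exactly the two extra conditions (your index conventions differ from the paper's by harmless transposes, and $\tr(C^{-1}B)=\tr(BC^{-1})$); and the $K(\varphi)$ formulas and the equality of the three subspaces follow as you say.
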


People are also interested in the situation that $V$ is equipped with a symplectic form $\omega\in\bigwedge^2V^*$, in which case we shall consider the action of the group $\mathrm{Sp}(V,\omega)$ on the space of 3-forms. The Lefschetz decomposition
\[\bigwedge\nolimits^3V^*=\bigwedge\nolimits^3_0V^*\oplus\omega\wedge V^*\]
breaks the space of 3-forms on $V$ into two $\mathrm{Sp}(V,\omega)$-irreducible pieces, where $\bigwedge^3_0V^*$ denotes the space of $\omega$-primitive 3-forms which can be characterized by
\[\bigwedge\nolimits^3_0V^*=\{\varphi:\Lambda\varphi=0\}=\{\varphi:\omega\wedge\varphi=0\},\]
where $\Lambda$ denotes the Lefschetz operator of contraction with $\omega$.
 
Since any $\varphi\in\omega\wedge V^*$ is automatically a member of the orbit $\clo_1$, it is natural to only consider the $\mathrm{Sp}(V,\omega)$-orbits in $\bigwedge^3_0V^*$. In particular, for any $\mathrm{GL}(V)$-orbit $\clo$, we would like to know how $\clo\cap\bigwedge^3_0V^*$ breaks into distinct $\mathrm{Sp}(V,\omega)$-orbits. This problem has been studied by many authors such as \cite{lychagin1983, banos2003, bryant2006b}. We now know that $\clo_-\cap\bigwedge^3_0V^*$ splits into two families of $\mathrm{Sp}(V,\omega)$-orbits $\clo_-^\pm(\mu)$, each parametrized by a positive scaling factor $\mu$, and $\clo_+\cap\bigwedge^3_0V^*$ decomposes as a family of $\mathrm{Sp}(V,\omega)$-orbits $\clo_+(\mu)$ parametrized by a positive scalar $\mu$, and that $\clo_j\cap\bigwedge^3_0V^*$ breaks as the union of two $\mathrm{Sp}(V,\omega)$-orbits $\clo_j^\pm$ for $j=0,1$, and that $\clo_3\cap\bigwedge^3_0V^*$ and $\clo_6=\{0\}$ are single $\mathrm{Sp}(V,\omega)$-orbits. Taken from \cite{bryant2006b}, the normal forms of these orbits can be summarized as follows:
\[\begin{tabular}{|c|c|l|}
\hline
Orbit & Dimension & Normal Form \\
\hline
$\clo_-^\pm(\mu)$ & $13$ & $\mu(e^{135}-e^{146}\mp e^{236}\mp e^{245})$ \\
\hline
$\clo_+(\mu)$ & $13$ & $\mu(e^{135}+e^{246})$ \\
\hline
$\clo_0^\pm$ & $13$ & $e^{146}\pm e^{236}\pm e^{245}$ \\
\hline
$\clo_1^\pm$ & $10$ & $(e^{13}\mp e^{24})\wedge e^5$ \\
\hline
$\clo_3\cap\bigwedge^3_0V^*$ & $7$ & $e^{135}$ \\
\hline
$\clo_6=\{0\}$ & 0 & $0$\\
\hline
\end{tabular}\]
In all these cases,  we have $\omega=e^{12}+e^{34}+e^{56}$.

An immediate consequence is the following important observation:
\begin{prop}
For any $\varphi\in\clo_0^-\coprod\clo_0^+=\clo_0\cap\bigwedge^3_0V^*$, we have that $\ker K(\varphi)=\img~K(\varphi)=\ker F(\varphi)$ is a Lagrangian subspace of $(V,\omega)$ such that both the restrictions of $\varphi$ and $F(\varphi)$ on it vanish. 
\end{prop}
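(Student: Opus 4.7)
The plan is to handle the three assertions about $L:=\ker K(\varphi)=\img K(\varphi)=\ker F(\varphi)$ separately. The coincidence of the three subspaces (each $3$-dimensional by the earlier tables) for $\varphi\in\clo_0$ follows from the preceding proposition by $\mathrm{GL}(V)$-equivariance; alternatively, $K(\varphi)\circ K(\varphi)=\tfrac{\id_V}{4}\cdot Q(\varphi)=0$ on $\clo_0$ gives $\img K(\varphi)\subseteq\ker K(\varphi)$, with equality by dimension, and $\ker F(\varphi)=K(\varphi)^{-1}(\ker\varphi\otimes\bigwedge^6V^*)=\ker K(\varphi)$ since $\ker\varphi=0$.

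The vanishing of $F(\varphi)|_L$ and $\varphi|_L$ then follow formally from the definitions. For $X\in L=\ker K(\varphi)$, the identity $F(\varphi)(X,Y,Z)=-2\varphi(K(\varphi)(X),Y,Z)=0$ gives $\iota_X F(\varphi)=0$, so $F(\varphi)|_L=0$. Writing any $X\in L=\img K(\varphi)$ as $X=K(\varphi)(u)$, one computes $\varphi(X,Y,Z)=-\tfrac{1}{2}F(\varphi)(u,Y,Z)$; for $Y\in L=\ker F(\varphi)$ this vanishes by the antisymmetry relation $F(\varphi)(u,Y,Z)=-F(\varphi)(Y,u,Z)=0$, so $\varphi|_L=0$.

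For the Lagrangian property, I would exploit that $\omega|_L=0$ is $\mathrm{Sp}(V,\omega)$-invariant and that $\clo_0^\pm$ are exactly the $\mathrm{Sp}(V,\omega)$-orbits in $\clo_0\cap\bigwedge^3_0V^*$, so it suffices to verify on the two normal forms $\varphi=e^{146}\pm e^{236}\pm e^{245}$ paired with $\omega=e^{12}+e^{34}+e^{56}$. Adopting the relabeling $\ud x^j=e^{2j-1}$, $\ud y^j=e^{2j}$, every monomial of $\varphi$ contains exactly one $\ud x$-factor and two $\ud y$-factors, so $\iota_{\pt_{x^j}}\varphi\in\bigwedge^2\spann\{\ud y^1,\ud y^2,\ud y^3\}$ and $\iota_{\pt_{x^j}}\varphi\wedge\varphi=0$ by pigeonhole on the three available $\ud y$-indices. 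Hence $\spann\{\pt_{x^1},\pt_{x^2},\pt_{x^3}\}\subseteq\ker K(\varphi)=L$, with equality by dimension. Since $\omega=\sum_j\ud x^j\wedge\ud y^j$ pairs $\ud x$ with $\ud y$, it vanishes on $L$, and $\dim L=3=\tfrac{1}{2}\dim V$ forces $L$ to be Lagrangian.

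The main subtlety is the Lagrangian property: $\omega$-isotropy is not $\mathrm{GL}(V)$-invariant, so it cannot be extracted from the purely $\mathrm{GL}(V)$-equivariant algebra of $K$, $F$, $Q$, and the symplectic structure must enter. Reducing to the two $\mathrm{Sp}(V,\omega)$-orbit representatives sidesteps this cleanly; a more intrinsic route would be to check that $F(\varphi)$ is $\omega$-primitive and then apply $\iota_X$ to $\omega\wedge F(\varphi)=0$ for $X\in L=\ker F(\varphi)$ to deduce that $\iota_X\omega$ lies in the annihilator of $L$ in $V^*$, yielding $\omega|_L=0$.
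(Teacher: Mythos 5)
Your proposal is correct and matches the paper's (implicit) argument: the paper presents this proposition as an immediate consequence of the $\mathrm{Sp}(V,\omega)$-orbit normal forms, together with the already-recorded facts that $Q=0$ and $\ker\varphi=0$ on $\clo_0$ and that $\ker K(\varphi)=\img K(\varphi)=\ker F(\varphi)=\spann\{\pt_x\}$ for the normal form. Your invariant derivations of the kernel/image coincidences and of the vanishing of $\varphi|_L$ and $F(\varphi)|_L$, plus the normal-form check of isotropy, are exactly the intended verification.
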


Under the presence of the symplectic form $\omega$, the 1-dimensional space $\bigwedge^6V^*$ is trivialized by $\dfrac{\omega^3}{3!}$, therefore for any primitive 3-form $\varphi$ we may canonically identify $K(\varphi)$, $F(\varphi)$ and $Q(\varphi)$ as elements in $\endo~V$, $\bigwedge_0^3V^*$ and $\rr$ respectively. Moreover, we have the following statement parallel to Proposition \ref{Fmap}.
\begin{prop}
The homogeneous cubic polynomial $F:\bigwedge_0^3V^*\to\bigwedge_0^3V^*$ satisfies
\begin{enumerate}
\item $F:\clo_-^\pm(\mu)\to\clo_-^\pm(4\mu^3)$ is bijective.
\item $F:\clo_+(\mu)\to\clo_+(2\mu^3)$ is bijective.
\item $F:\clo_0^\pm\to\clo_3\cap\bigwedge^3_0V^*$ is surjective.
\item $F\equiv0$ on $\clo_1^\pm$, $\clo_3\cap\bigwedge^3_0V^*$, and $\clo_6=\{0\}$.
\end{enumerate}
\end{prop}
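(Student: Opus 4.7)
The plan is to reduce the entire proposition to a handful of normal-form computations, exploiting two general features of $F$: its $\mathrm{GL}(V)$- (hence $\mathrm{Sp}(V,\omega)$-) equivariance and its degree-three homogeneity $F(\mu\varphi)=\mu^3F(\varphi)$. After identifying $\bigwedge^6V^*\cong\rr$ via $\omega^3/3!$, $F$ becomes an $\mathrm{Sp}(V,\omega)$-equivariant polynomial self-map of $\bigwedge^3V^*$, and since each family $\clo_-^\pm(\mu)$, $\clo_+(\mu)$ is a single $\mathrm{Sp}(V,\omega)$-orbit for fixed $\mu$, equivariance forces $F$ to send any such orbit into exactly one target orbit (or to $0$), with the target scaling parameter proportional to $\mu^3$.

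Item (4) is immediate from Proposition \ref{Fmap}(3), which asserts $F\equiv 0$ on $\clo_1\coprod\clo_3\coprod\clo_6$; this restricts to the primitive pieces $\clo_1^\pm$, $\clo_3\cap\bigwedge^3_0V^*$, and $\{0\}$. For (3), Proposition \ref{Fmap}(2) already places $F(\clo_0)$ inside $\clo_3\otimes(\bigwedge^6V^*)^{\otimes 2}$, so the only additional point is that $F(\varphi)$ is primitive whenever $\varphi\in\clo_0^\pm$ is primitive. By $\mathrm{Sp}(V,\omega)$-equivariance this reduces to a single computation on $\varphi=e^{146}\pm e^{236}\pm e^{245}$ with $\omega=e^{12}+e^{34}+e^{56}$: apply $K(\varphi)(v)=-\iota_v\varphi\wedge\varphi$ followed by $F(\varphi)(v_1,v_2,v_3)=-2\varphi(K(\varphi)(v_1),v_2,v_3)$ to obtain a nonzero multiple of a single monomial lying in $\bigwedge^3_0V^*$. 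Surjectivity onto $\clo_3\cap\bigwedge^3_0V^*$ then follows from the transitivity of $\mathrm{Sp}(V,\omega)$ on that orbit.

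For (1) and (2), bijectivity of $F:\clo_\pm\to\clo_\pm\otimes(\bigwedge^6V^*)^{\otimes 2}$ is already provided by Proposition \ref{Fmap}(1), and restricts to bijectivity on each $\mathrm{Sp}$-orbit once the target orbit is identified. What remains is to identify the target by a single evaluation on the $\mu=1$ normal form in each family. For $\clo_-^\pm(1)$, the Hitchin identities \eqref{relations} give $F(\varphi)=2\hat\varphi\sqrt{-\lambda(\varphi)}$; evaluating $\sqrt{-\lambda(\varphi)}$ on the normal form via $Q(\varphi)=4\lambda(\varphi)$ yields $F(\varphi)=4\hat\varphi$, and $\hat\varphi$ sits in the same $\mathrm{Sp}$-orbit as $\varphi$ because it is obtained from $\varphi$ by the symplectic complex structure $\pt_{x^j}\mapsto\pt_{y^j}$, $\pt_{y^j}\mapsto-\pt_{x^j}$ composed with the global sign flip $e^j\mapsto-e^j$. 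Thus $F:\clo_-^\pm(\mu)\to\clo_-^\pm(4\mu^3)$ with the $\pm$ label preserved. For $\clo_+(1)$, a direct evaluation of $K$ and $F$ on $\varphi=e^{135}+e^{246}$ produces an element $\mathrm{Sp}(V,\omega)$-equivalent to $2\varphi$, giving the scaling factor $2$.

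The principal obstacle is the signs-and-normalizations bookkeeping in the last paragraph: the raw normal-form output of $F$ is generally not literally equal to the claimed target normal form, only $\mathrm{Sp}(V,\omega)$-equivalent to it, and one has to exhibit the connecting symplectic transformation (in particular confirming that the sign label $\pm$ in $\clo_-^\pm$ is preserved rather than flipped). Once this is verified for $\mu=1$, homogeneity extends the conclusion to all $\mu$ and the rest of the proposition follows automatically.
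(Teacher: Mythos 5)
Your proof is correct. The paper states this proposition without proof (it is among the Section~2 statements the author explicitly leaves as exercises), and your strategy --- $\mathrm{Sp}(V,\omega)$-equivariance plus cubic homogeneity reducing everything to a single normal-form evaluation per orbit family, with the scaling constants $4$ and $2$ and the preservation of the $\pm$ label checked on the $\mu=1$ representatives --- is exactly the intended argument, consistent with how the paper verifies the neighboring algebraic identities by evaluation on normal forms.
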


By using $\omega$, there are a few ways to construct a symmetric bilinear form that is quadratic in $\varphi$. It is not surprising that they all yield the same quadratic form as detailed in the following proposition.
\begin{prop}
There is a natural symmetric bilinear form $q(\omega,\varphi)$ on $V$ associated to the pair $(\omega,\varphi)$, which satisfies
\[q(\omega,\varphi)(v_1,v_2)=\omega(v_1,K(\varphi)(v_2))=\frac{\iota_{v_1}\varphi\wedge\iota_{v_2}\varphi\wedge\omega}{\omega^3/3!}=-\omega(\iota_{v_1} \varphi,\iota_{v_2}\varphi).\]
\end{prop}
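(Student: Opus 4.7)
The plan is to verify that the three expressions coincide; symmetry of $q(\omega,\varphi)$ then follows for free. Throughout, the essential structural input is the primitivity relation $\omega\wedge\varphi=0$ coming from $\varphi\in\bigwedge^3_0 V^*$. The middle expression $\iota_{v_1}\varphi\wedge\iota_{v_2}\varphi\wedge\omega/(\omega^3/3!)$ is manifestly symmetric under $v_1\leftrightarrow v_2$, because $\iota_{v_1}\varphi$ and $\iota_{v_2}\varphi$ are 2-forms and 2-forms commute under wedge product (the sign is $(-1)^{2\cdot 2}=1$). Bilinearity in each slot is obvious. Hence once the three expressions are shown to agree, the common value is automatically a symmetric bilinear form.

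\textbf{First equality.} Unwinding the definition, the vector $K(\varphi)(v_2)\in V$ (under the trivialization $\omega^3/3!$ of $\bigwedge^6V^*$) is characterized by $\iota_{K(\varphi)(v_2)}(\omega^3/3!)=-\iota_{v_2}\varphi\wedge\varphi$. The elementary identity $\alpha(w)\,\sigma=\alpha\wedge\iota_w\sigma$ for a 1-form $\alpha$, vector $w$, and top form $\sigma$, applied with $\alpha=\iota_{v_1}\omega$, $w=K(\varphi)(v_2)$, $\sigma=\omega^3/3!$, gives
\[\omega(v_1,K(\varphi)(v_2))\cdot\frac{\omega^3}{3!}=-\iota_{v_1}\omega\wedge\iota_{v_2}\varphi\wedge\varphi.\]
Applying $\iota_{v_1}$ to the primitivity relation $\varphi\wedge\omega=0$ and using the graded Leibniz rule yields $\iota_{v_1}\varphi\wedge\omega=\varphi\wedge\iota_{v_1}\omega=-\iota_{v_1}\omega\wedge\varphi$, so $\iota_{v_1}\omega\wedge\varphi=-\iota_{v_1}\varphi\wedge\omega$. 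Substituting this into the displayed equation and rearranging the (commuting) 2-form factors produces the middle expression.

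\textbf{Second equality.} The preliminary fact needed is that $\iota_v\varphi$ is itself a primitive 2-form whenever $\varphi$ is a primitive 3-form. Indeed, applying $\iota_v$ to $\omega^2\wedge\varphi=0$ and using $\iota_v(\omega^2)=2\iota_v\omega\wedge\omega$ together with $\omega\wedge\varphi=0$ gives $\omega^2\wedge\iota_v\varphi=0$, i.e., $\iota_v\varphi$ is primitive as a 2-form on a $6$-dimensional symplectic vector space. The desired equality then reduces to the standard algebraic identity
\[\alpha_0\wedge\beta_0\wedge\omega=-\,\omega(\alpha_0,\beta_0)\cdot\frac{\omega^3}{3!}\]
valid for all primitive 2-forms $\alpha_0,\beta_0\in\bigwedge^2V^*$, where on the right-hand side $\omega(\alpha_0,\beta_0)$ denotes the natural symmetric pairing on $\bigwedge^2V^*$ induced by the dual bivector $\pi=\omega^{-1}$. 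This is a standard Lefschetz-theoretic fact and may be verified directly on the 8-dimensional space of primitive 2-forms using a Darboux basis for $\omega$.

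The entire proof is mechanical; the only real work is bookkeeping signs through graded-derivation manipulations and choosing compatible normalization conventions for the induced pairing on 2-forms. There is no conceptual obstacle.
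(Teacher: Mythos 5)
Your argument is correct, but it takes a genuinely different route from the paper. The paper's proof is a two-line reduction: since all three expressions are polynomial (quadratic) in $\varphi$, it suffices to verify the identities on the open dense subset $\coprod_{\mu>0}\clo_-^+(\mu)$ of $\bigwedge^3_0V^*$, where the verification is delegated to Lemma 5 of the author's earlier paper \cite{fei2021b}. You instead verify the chain of equalities directly and uniformly for an arbitrary primitive 3-form, using only the characterization $\iota_{K(\varphi)(v)}(\omega^3/3!)=-\iota_v\varphi\wedge\varphi$, the graded Leibniz rule applied to the primitivity relation $\omega\wedge\varphi=0$, and the Lefschetz-type identity $\alpha_0\wedge\beta_0\wedge\omega=-\omega(\alpha_0,\beta_0)\,\omega^3/3!$ for primitive 2-forms. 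What your approach buys is self-containedness (no external citation, no density/continuity argument, and the primitivity hypothesis is visibly the only input); what the paper's approach buys is brevity and the reuse of an already-established computation. Two small blemishes in your write-up, neither fatal: the space of primitive 2-forms on a 6-dimensional symplectic vector space is $\binom{6}{2}-1=14$-dimensional, not 8-dimensional (you may be thinking of primitive $(1,1)$-forms on a complex 3-fold); and the final identity for primitive 2-forms is asserted rather than checked -- it does hold with the determinant convention for the pairing that $\omega^{-1}$ induces on $\bigwedge^2V^*$, which is the convention the proposition implicitly uses, so you should state that convention explicitly rather than deferring it to ``compatible normalization.''
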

\begin{proof}
Since everything is algebraic and the union $\coprod_{\mu>0}\clo_-^+(\mu)$ is an open subset in $\bigwedge^3_0V^*$, we only need to prove this proposition for every member $\varphi\in\clo_-^+(\mu)$. This special case has been done in \cite[Lemma 5]{fei2021b}.
\end{proof}
The symmetric bilinear form $q(\omega,\varphi)$ has different signature depending on the orbital type of $\varphi$. To be more specific, we have:
\begin{prop}\label{cases}~
\begin{enumerate}
\item For $\varphi\in\clo_-^+(\mu)$, the symmetric bilinear form $q(\omega,\varphi)$ is positive definite, namely its signature is $(0,6,0)$.
\item For $\varphi\in\clo_-^-(\mu)$, the symmetric bilinear form $q(\omega,\varphi)$ is non-degenerate with signature $(0,2,4)$.
\item For $\varphi\in\clo_+(\mu)$, the symmetric bilinear form $q(\omega,\varphi)$ has split signature, namely its signature is $(0,3,3)$.
\item For $\varphi\in\clo_0^+$, the symmetric bilinear form $q(\omega,\varphi)$ has signature $(3,3,0)$. In particular the Lagrangian subspace $L:=\ker K(\varphi)=\img~K(\varphi)=\ker F(\varphi)$ is $q$-orthogonal to every vector in $V$. The induced bilinear form on $V/L$ is positive definite. As the map $K(\varphi):V/L\to L$ is an isomorphism, we also obtain a canonical positive definite metric on $L$.
\item For $\varphi\in\clo_0^-$, the symmetric bilinear form $q(\omega,\varphi)$ has signature $(3,1,2)$. In particular the Lagrangian subspace $L:=\ker K(\varphi)=\img~K(\varphi)=\ker F(\varphi)$ is $q$-orthogonal to every vector in $V$. The induced bilinear form on $V/L$ has signature $(0,1,2)$. As the map $K(\varphi):V/L\to L$ is an isomorphism, we also obtain a canonical metric of signature $(0,1,2)$ on $L$.
\item For $\varphi\in\clo_1^+$, the symmetric bilinear form $q(\omega,\varphi)$ has signature $(5,1,0)$. In particular the coisotropic subspace $\ker\varphi=\ker K(\varphi)$ is $q$-orthogonal to every vector in $V$. The induced bilinear form on the 1-dimensional space $V/\ker\varphi$ is positive definite.
\item For $\varphi\in\clo_1^-$, the symmetric bilinear form $q(\omega,\varphi)$ has signature $(5,0,1)$. In particular the coisotropic subspace $\ker\varphi=\ker K(\varphi)$ is $q$-orthogonal to every vector in $V$. The induced bilinear form on the 1-dimensional space $V/\ker\varphi$ is negative definite.
\item For $\varphi\in\clo_3\cap\bigwedge^3_0V^*$ or $\varphi=0$, the bilinear form $q(\omega,\varphi)$ vanishes. 
\end{enumerate}
\end{prop}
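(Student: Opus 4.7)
The plan is to exploit the $\mathrm{Sp}(V,\omega)$-invariance of $q(\omega,\varphi)$ to reduce to a case-by-case verification on the normal forms listed earlier. Since $q$ is constructed algebraically and is $\mathrm{Sp}(V,\omega)$-equivariant, its signature is an orbit invariant; rescaling $\varphi\mapsto c\varphi$ with $c>0$ only rescales $q$ by $c^2$, so even within each one-parameter family $\clo_-^\pm(\mu)$ and $\clo_+(\mu)$ a single representative suffices.

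The unifying structural input is the identity $q(\omega,\varphi)(v_1,v_2) = \omega(v_1, K(\varphi)(v_2))$: non-degeneracy of $\omega$ forces the radical of $q$ to coincide with $\ker K(\varphi)$. Reading $\dim\ker K(\varphi)$ off the earlier table and combining with the Lagrangian or coisotropic description of $\ker K(\varphi)$ from the preceding proposition, this already yields the rank of $q$ in each of the eight cases, the assertions that $\ker K(\varphi)$ is $q$-orthogonal to all of $V$ (in the orbits $\clo_0^\pm$ and $\clo_1^\pm$), and the fact that $q$ descends to a non-degenerate form on $V/\ker K(\varphi)$. All that remains is to pin down the inertia on this quotient.

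For the stable orbits, a direct Gram-matrix computation using $q(\omega,\varphi)(v_i,v_j) = \iota_{v_i}\varphi\wedge \iota_{v_j}\varphi\wedge\omega/(\omega^3/3!)$ on each normal form gives the three claimed signatures; the case $\clo_-^+(\mu)$ is already covered by \cite[Lemma 5]{fei2021b}, and the remaining two are analogous. For $\clo_0^\pm$, the computation on the normal form $e^{146}\pm e^{236}\pm e^{245}$ shows that $q$ vanishes on $\spann\{e_1,e_3,e_5\}$ (which is exactly $L$) and induces a non-degenerate form on the quotient basis $\{e_2,e_4,e_6\}$ whose sign pattern yields $(3,3,0)$ in the $+$ case and $(3,1,2)$ in the $-$ case; the induced metric on $L$ is then the pullback along the isomorphism $K(\varphi): V/L\to L$. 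For $\clo_1^\pm$, only a single one-dimensional entry need be computed, and its sign distinguishes $(5,1,0)$ from $(5,0,1)$. For $\clo_3\cap\bigwedge^3_0V^*$ and the trivial orbit, the earlier classification already gives $K(\varphi)=0$, so $q\equiv 0$ follows at once.

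The main obstacle is organizational rather than conceptual: eight separate normal-form computations must be carried out carefully, and the identification of $\ker K(\varphi)$ with the radical of $q$ is the key shortcut that limits the actual linear algebra in each case to at most a $3\times 3$ block.
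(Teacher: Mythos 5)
Your proof is correct and is essentially the intended argument: the paper states this proposition without proof (it belongs to the elementary facts explicitly left as exercises in Section 2), and the expected verification is exactly the one you give --- reduce by $\mathrm{Sp}(V,\omega)$-equivariance and quadratic rescaling to the listed normal forms, identify the radical of $q$ with $\ker K(\varphi)$ via non-degeneracy of $\omega$, and compute the remaining Gram blocks case by case. Your identification of the radical with $\ker K(\varphi)$ is the right organizing shortcut, and the sample computations (e.g.\ on $e^{146}\pm e^{236}\pm e^{245}$, where the quotient Gram matrix on $\{e_2,e_4,e_6\}$ is diagonal with entries $2,\pm2,\pm2$) check out against the claimed signatures.
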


In the work \cite{fei2021b}, a primitive 3-form $\varphi$ is called positive if it is a member of the orbit $\clo_-^+(\mu)$ for some $\mu>0$. In this setting, the natural almost complex structure $J(\varphi)$ from Hitchin's construction is compatible with $\omega$ so the associated metric $g(\varphi)(\cdot,\cdot)=\omega(\cdot,J(\varphi)\cdot)$ is positive definite. We can define the norm of $\varphi$ through
\[|\varphi|^2\frac{\omega^3}{3!}=\varphi\wedge\hat\varphi,\]
which turns out to be the same as the norm of $\varphi$ measured under the metric $g(\varphi)$. In this set-up, the relations in (\ref{relations}) can be rewritten as \bea
\begin{split}K(\varphi)&=\frac{1}{2}|\varphi|^2J(\varphi),\\
F(\varphi)&=|\varphi|^2\hat\varphi,\\
Q(\varphi)&=-|\varphi|^4,\\
\mu&=\frac{1}{2}|\varphi|,\\
q(\omega,\varphi)&=\frac{|\varphi|^2}{2}g(\varphi).
\end{split}
\eea

\section{Integrability conditions}

From now on we shall always work under the presence of a symplectic form. Let $(M,\omega)$ be a connected symplectic 6-manifold, and let $\varphi$ be a smooth $\omega$-primitive 3-form on $M$. As explained in the previous section, we can naturally view $K(\varphi)$, $F(\varphi)$ and $Q(\varphi)$ as smooth sections of $\endo~TM$, $\bigwedge^3_0T^*M$ and $\bigwedge^0T^*M$ respectively, which further allows us to talk about various notions of integrability associated to $\varphi$. For the convenience of notation, we drop the reference to $\varphi$ in $K(\varphi)$, $F(\varphi)$ and $Q(\varphi)$ and similar quantities when there is no confusion.
\begin{dfn}~
\begin{enumerate}
\item The 3-form $\varphi$ is called \emph{integrable} or \emph{closed} if $\ud\varphi=0$. 
\item The 3-form $\varphi$ is called \emph{$K$-integrable} if the Nijenhuis tensor $N_K$ of $K(\varphi)$ vanishes. Here $N_K$ is defined by
\[N_K(X,Y)=-K^2[X,Y]+K([KX,Y]+[X,KY])-[KX,KY]\]
for any vector fields $X$ and $Y$ on $M$.
\item The 3-form $\varphi$ is called \emph{$F$-integrable} if $\ud F(\varphi)=0$.
\item The 3-form $\varphi$ is called \emph{$Q$-integrable} if $\ud Q(\varphi)=0$, namely $Q$ is a constant function on $M$.
\end{enumerate}
\end{dfn}
It is not surprising that these notions of integrability are related to each other. To explore this aspect, let us first establish a series of useful identities.

\begin{lemma}
For any vector fields $X$ and $Y$, we have
\bea
&&\iota_X\varphi\wedge F(\varphi)=-\varphi\wedge\iota_XF(\varphi)=\frac{1}{2}\iota_X(\varphi\wedge F(\varphi)),\label{o1}\\
&&\iota_X\varphi\wedge\iota_YF(\varphi)+\iota_Y\varphi\wedge\iota_XF(\varphi)=0,\label{o21}\\
&&\iota_Y\iota_X\varphi\wedge F(\varphi)=\varphi\wedge\iota_Y\iota_XF(\varphi)\label{o22}.
\eea
\end{lemma}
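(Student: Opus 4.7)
The plan is to first establish the core equality
\[\iota_X\varphi\wedge F(\varphi)+\varphi\wedge\iota_XF(\varphi)=0,\]
from which the entire chain in (\ref{o1}) follows, and then derive (\ref{o21}) and (\ref{o22}) by formal manipulations with $\iota_X$. Granted the core equality, the second equality in (\ref{o1}) is a consequence of the graded Leibniz rule: because $|\varphi|=3$, one has $\iota_X(\varphi\wedge F)=\iota_X\varphi\wedge F-\varphi\wedge\iota_XF$, which combined with the core equality yields $\iota_X(\varphi\wedge F)=2\,\iota_X\varphi\wedge F=-2\,\varphi\wedge\iota_XF$.

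For the core equality, both sides are $\mathrm{GL}(V)$-equivariant polynomials in $\varphi$, so by polynomial extension it suffices to verify it on the Zariski-open subset $\coprod_{\mu>0}\clo_-^+(\mu)\subset\bigwedge^3_0V^*$. On this subset, Hitchin's theory recalled in Section 2 supplies a compatible complex structure $J$ such that $\Omega:=\varphi+i\hat\varphi$ is of type $(3,0)$ and $F(\varphi)=|\varphi|^2\hat\varphi$. Since $\Omega$ is of type $(3,0)$ in complex dimension three, the form $\iota_X\Omega\wedge\Omega$ is of type $(5,0)$ and vanishes identically. Expanding $\iota_X\Omega\wedge\Omega=0$ and reading off its imaginary part (together with $\iota_X\hat\varphi\wedge\varphi=\varphi\wedge\iota_X\hat\varphi$, since the sign $(-1)^{2\cdot 3}=+1$) gives $\iota_X\varphi\wedge\hat\varphi+\varphi\wedge\iota_X\hat\varphi=0$. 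Multiplying by $|\varphi|^2$ yields the core equality on this orbit, and polynomial extension propagates it to all of $\bigwedge^3_0V^*$.

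The identity (\ref{o21}) then follows by applying $\iota_X$ to the core equality: using $\iota_X^2=0$ and graded Leibniz, both sides collapse to $\pm\iota_X\varphi\wedge\iota_XF$, forcing $\iota_X\varphi\wedge\iota_XF=0$; polarization $X\mapsto X+Y$ then yields (\ref{o21}). Finally (\ref{o22}) is obtained by applying $\iota_Y$ to the core equality and expanding via Leibniz; the resulting four terms rearrange into $\iota_Y\iota_X\varphi\wedge F-\varphi\wedge\iota_Y\iota_XF=-(\iota_X\varphi\wedge\iota_YF+\iota_Y\varphi\wedge\iota_XF)$, whose right-hand side vanishes by (\ref{o21}). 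The only real subtlety is meticulous tracking of the graded signs; no geometric input beyond Hitchin's formula on the open orbit is required.
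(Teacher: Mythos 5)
Your proof is correct, and it reaches the key identity (\ref{o1}) by a genuinely different route than the paper. The paper derives the core equality $\iota_X\varphi\wedge F(\varphi)+\varphi\wedge\iota_XF(\varphi)=0$ by a direct algebraic computation from the defining relations $\iota_{KX}\frac{\omega^3}{3!}=-\iota_X\varphi\wedge\varphi$, $\iota_XF(\varphi)=-2\iota_{KX}\varphi$, $\varphi\wedge F(\varphi)=-Q(\varphi)\frac{\omega^3}{3!}$ and $K^2=\frac{Q}{4}\id$: one computes $\varphi\wedge\iota_XF(\varphi)=-2\varphi\wedge\iota_{KX}\varphi=\frac{Q}{2}\iota_X\frac{\omega^3}{3!}$ and compares with $\iota_X(\varphi\wedge F(\varphi))=-Q\,\iota_X\frac{\omega^3}{3!}$. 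This is a four-line manipulation valid verbatim for every $3$-form, with no orbit analysis. You instead verify the core equality on the open orbit $\coprod_{\mu>0}\clo_-^+(\mu)$ via Hitchin's structure theory (the vanishing of the $(5,0)$-form $\iota_X\Omega\wedge\Omega$ and $F(\varphi)=|\varphi|^2\hat\varphi$) and then extend by polynomial density --- the same ``check on an open orbit'' device the paper itself uses for Proposition 2.7 and for the identities defining $q(\omega,\varphi)$, so it is certainly in the spirit of the text. Your route is shorter at the cost of importing Hitchin's theory and the density argument (one should note, as you implicitly do, that both sides are quartic polynomial maps in $\varphi$ for fixed $X$ and that the orbit is a nonempty Euclidean-open subset of $\bigwedge^3_0V^*$); the paper's route is self-contained and makes transparent that the identity needs neither primitivity nor stability of $\varphi$. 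Your derivations of (\ref{o21}) (contract with $X$, use $\iota_X^2=0$, polarize) and of (\ref{o22}) (contract the core equality with $Y$ and invoke (\ref{o21})) are correct sign-chases and essentially coincide with the paper's, which obtains (\ref{o21}) by antisymmetrizing $\iota_Y$ applied to the outer equality of (\ref{o1}) and (\ref{o22}) by applying $\iota_Y$ to the first equality of (\ref{o1}).
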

\begin{proof}
We shall prove this lemma by direct calculation. It is noteworthy that we have the following equations from the definition of $K$, $F$, and $Q$:
\bea
\iota_{KX}\frac{\omega^3}{3!}&=&-\iota_X\varphi\wedge\varphi,\label{K}\\
\iota_X F(\varphi)&=&-2\iota_{KX}\varphi,\label{F}\\
\varphi\wedge F(\varphi)&=&-Q(\varphi)\frac{\omega^3}{3!}\label{Q},\\
K^2&=&\frac{Q(\varphi)}{4}\id\label{square}.
\eea
Using (\ref{F}) and (\ref{K}), we get
\[-\varphi\wedge\iota_XF(\varphi)=2\varphi\wedge\iota_{KX}\varphi=-2\iota_{K^2X}\frac{\omega^3}{3!}=-\frac{Q}{2}\iota_X\frac{\omega^3}{3!}.\]
Therefore
\[\iota_X\varphi\wedge F(\varphi)-\varphi\wedge\iota_XF(\varphi)=\iota_X(\varphi\wedge F(\varphi))=-\iota_XQ\frac{\omega^3}{3!}=-2\varphi\wedge\iota_XF(\varphi)\]
and we prove (\ref{o1}).

Take $\iota_Y$ of left and right sides of (\ref{o1}), we get
\[\frac{1}{2}\iota_Y\iota_X(\varphi\wedge F(\varphi))=\iota_Y(\iota_X\varphi\wedge F(\varphi))=\iota_Y\iota_X\varphi\wedge F(\varphi)+\iota_X\varphi\wedge\iota_YF(\varphi).\]
As this expression is anti-symmetric in $X$ and $Y$, we obtain (\ref{o21}).

Take $\iota_Y$ of left and middle sides of (\ref{o1}), we get
\[\iota_Y\iota_X\varphi\wedge F(\varphi)+\iota_X\varphi\wedge\iota_YF(\varphi)=-\iota_Y\varphi\wedge\iota_XF(\varphi)+\varphi\wedge\iota_Y\iota_X F(\varphi).\]
Plug in (\ref{o21}) we prove (\ref{o22}).
\end{proof}
Next we establish an identity relating the Nijenhuis tensor $N_K$ with other quantities like $\ud\varphi$ and $\ud F(\varphi)$.
\begin{thm}
The Nijenhuis tensor $N_K$ satisfies
\bea\label{id}
\iota_{N_K(X,Y)}\frac{\omega^3}{3!}&=&\iota_Y\iota_X\ud\varphi\wedge F(\varphi)-\ud\varphi\wedge\iota_Y\iota_X F(\varphi)+2\varphi\wedge(\iota_Y\iota_{KX}-\iota_X\iota_{KY})\ud\varphi\nonumber\\
&&+\varphi\wedge\iota_Y\iota_X\ud F(\varphi).
\eea
\end{thm}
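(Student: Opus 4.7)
The plan is to compute $\iota_{N_K(X,Y)}\frac{\omega^3}{3!}$ directly from the definition of the Nijenhuis tensor, converting each Lie bracket into exterior derivatives by means of the standard commutator $\iota_{[U,V]}=\cll_U\iota_V-\iota_V\cll_U$ together with Cartan's magic formula $\cll_U=\ud\iota_U+\iota_U\ud$. Throughout, the algebraic identities (\ref{K})--(\ref{square}) serve to translate between $K$, $\varphi$, and $F(\varphi)$, while the symplectic condition $\ud\omega=0$ supplies $\ud(\omega^3/3!)=0$, so that Cartan's formula collapses $\cll_U(\omega^3/3!)$ to $\ud\iota_U(\omega^3/3!)$.

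Concretely, I would first split
\[\iota_{N_K(X,Y)}\frac{\omega^3}{3!}=-\iota_{K^2[X,Y]}\frac{\omega^3}{3!}+\iota_{K[KX,Y]}\frac{\omega^3}{3!}+\iota_{K[X,KY]}\frac{\omega^3}{3!}-\iota_{[KX,KY]}\frac{\omega^3}{3!}.\]
For the first three pieces, (\ref{K}) absorbs the outer $K$ via $\iota_{KZ}(\omega^3/3!)=-\iota_Z\varphi\wedge\varphi$, and (\ref{F}) replaces $\iota_{K[X,Y]}\varphi$ by $-\frac{1}{2}\iota_{[X,Y]}F(\varphi)$. The fourth piece is treated by the same commutator recipe: expand $\iota_{[KX,KY]}$ as $\cll_{KX}\iota_{KY}-\iota_{KY}\cll_{KX}$, and then use (\ref{K}) to eliminate $\iota_{KX}(\omega^3/3!)$ and $\iota_{KY}(\omega^3/3!)$ in favor of wedges with $\varphi$. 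Applying Cartan's formula to every remaining $\cll$ expresses the whole left-hand side in terms of $\ud\varphi$, $\ud F(\varphi)$, $\varphi$, $F(\varphi)$, and inner products involving $X$, $Y$, $KX$, and $KY$.

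The main obstacle will be the bookkeeping. The expansion produces many cross-products such as $\iota_U\varphi\wedge\iota_V F(\varphi)$ and $\iota_V\iota_U\varphi\wedge F(\varphi)$ that do not appear on the right-hand side. These parasitic terms are precisely what the preceding lemma is engineered to control: the identities (\ref{o1})--(\ref{o22}) supply the pairings that cancel or consolidate them into the four target shapes $\iota_Y\iota_X\ud\varphi\wedge F(\varphi)$, $-\ud\varphi\wedge\iota_Y\iota_X F(\varphi)$, $2\varphi\wedge(\iota_Y\iota_{KX}-\iota_X\iota_{KY})\ud\varphi$, and $\varphi\wedge\iota_Y\iota_X\ud F(\varphi)$. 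I would organize the final tally by grouping summands according to whether they carry $\ud\varphi$ or $\ud F(\varphi)$, since those two families simplify essentially independently under the lemma, and check at the end that the purely algebraic residue (in particular the contribution from $-\iota_{K^2[X,Y]}(\omega^3/3!)$) vanishes by (\ref{o1}).
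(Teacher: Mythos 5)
Your plan is correct and follows essentially the same route as the paper: expand the four bracket terms of $N_K$, convert $\iota_{[U,V]}$ into Lie derivatives and then into exterior derivatives via Cartan's formula, translate everything through the algebraic identities (\ref{K})--(\ref{square}), and use the lemma (\ref{o1})--(\ref{o22}) (together with double contractions of the degree-reason identities $\ud\varphi\wedge F(\varphi)=0$, $\varphi\wedge\ud F(\varphi)=0$, $\varphi\wedge\ud\varphi=0$) to cancel the cross terms. The only caveat is your closing remark: the contribution of $-\iota_{K^2[X,Y]}(\omega^3/3!)$ does not vanish but supplies part of the $\varphi\wedge\iota_Y\iota_X\ud F(\varphi)$ term; it is only the derivative-of-contraction terms such as $\ud\iota_X\iota_YF(\varphi)\wedge\varphi$ that cancel, against matching terms from the $[KX,KY]$ piece.
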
 
\begin{proof}
From (\ref{K}) and (\ref{F}) we know
\bea
&&-\iota_{[KX,KY]}\frac{\omega^3}{3!}=-\iota_{\cll_{KX}(KY)}\frac{\omega^3}{3!}\nonumber\\
&=&-\cll_{KX}\iota_{KY}\frac{\omega^3}{3!}+\iota_{KY}\cll_{KX}\frac{\omega^3}{3!} \nonumber\\
&=&-\cll_{KX}\iota_{KY}\frac{\omega^3}{3!} +\cll_{KY}\iota_{KX}\frac{\omega^3}{3!}-d\iota_{KY}\iota_{KX}\frac{\omega^3}{3!}\nonumber\\
&=&\cll_{KX}(\iota_Y\varphi\wedge\varphi)-\cll_{KY}(\iota_X\varphi\wedge\varphi)-d\iota_{KY}\iota_{KX}\frac{\omega^3}{3!}\nonumber\\
&=&\left(\cll_{KX}\iota_Y\varphi-\cll_{KY}\iota_X\varphi\right)\wedge\varphi-d\iota_{KY}\iota_{KX}\frac{\omega^3}{3!} +\iota_Y\varphi\wedge\iota_{KX}\ud\varphi\nonumber\\
&&-\iota_X\varphi\wedge\iota_{KY}\ud\varphi+\frac{1}{2}\iota_X\varphi\wedge\ud\iota_YF(\varphi)-\frac{1}{2}\iota_Y\varphi\wedge\ud\iota_XF(\varphi).\label{t1}
\eea
Similarly, we have that
\bea
&&\iota_{K([KX,Y]+[X,KY])}\frac{\omega^3}{3!}=-\iota_{[KX,Y]}\varphi\wedge\varphi-(X\leftrightarrow Y)\nonumber\\
&=&(-\cll_{KX}\iota_Y\varphi+\iota_Y\cll_{KX}\varphi)\wedge\varphi-(X\leftrightarrow Y)\nonumber\\
&=&\left(-\cll_{KX}\iota_Y\varphi+\iota_Y\iota_{KX}\ud\varphi-\frac{1}{2}\iota_Y\ud\iota_XF(\varphi)\right)\wedge\varphi-(X\leftrightarrow Y),\label{t2}
\eea
and
\bea
&&-\iota_{K^2[X,Y]}\frac{\omega^3}{3!}=\iota_{K[X,Y]}\varphi\wedge\varphi=-\frac{1}{2}\iota_{[X,Y]}F(\varphi)\wedge\varphi\nonumber\\
&=&-\frac{1}{2}(\cll_X\iota_YF(\varphi)-\iota_Y\cll_XF(\varphi))\wedge\varphi\nonumber\\
&=&-\frac{1}{2}(\ud\iota_X\iota_Y+\iota_X\ud\iota_Y-\iota_Y\ud\iota_X)F(\varphi)\wedge\varphi+\frac{1}{2}\iota_Y\iota_X\ud F(\varphi)\wedge\varphi.\label{t3}
\eea
Adding (\ref{t1}), (\ref{t2}) and (\ref{t3}) together, we get:
\bea
&&\iota_{N_K(X,Y)}\frac{\omega^3}{3!}\nonumber\\
&=&-\frac{1}{2}\ud\iota_{KY}\iota_{KX}\frac{\omega^3}{3!}+\frac{1}{2}\iota_X\varphi\wedge\ud\iota_YF(\varphi) -\frac{1}{4}\ud\iota_X\iota_YF(\varphi)\wedge\varphi\label{int}\\
&&+\iota_Y\varphi\wedge\iota_{KX}\ud\varphi+\iota_Y\iota_{KX}\ud\varphi\wedge\varphi+\frac{1}{4}\iota_Y\iota_X\ud F(\varphi)\wedge\varphi-(X\leftrightarrow Y).\nonumber
\eea
Notice that
\bea
&&\iota_{KY}\iota_{KX}\frac{\omega^3}{3!}=-\iota_{KY}(\iota_X\varphi\wedge\varphi)\nonumber\\
&=&\iota_X\iota_{KY}\varphi\wedge\varphi-\iota_X\wedge\iota_{KY}\varphi\nonumber\\
&=&-\frac{1}{2}\iota_X\iota_YF(\varphi)\wedge\varphi+\frac{1}{2}\iota_X\varphi\wedge\iota_YF(\varphi),\nonumber
\eea
Consequently we have
\bea
-\frac{1}{2}\ud\iota_{KY}\iota_{KX}\frac{\omega^3}{3!}&=&\frac{1}{4}\ud\iota_X\iota_YF(\varphi)\wedge\varphi +\frac{1}{4}\iota_X\iota_YF(\varphi)\wedge\ud\varphi\nonumber\\
&&-\frac{1}{4}\ud\iota_X\varphi\wedge\iota_YF(\varphi)-\frac{1}{4}\iota_X\varphi\wedge\ud\iota_YF(\varphi).\label{kk}
\eea
Plug (\ref{kk}) in (\ref{int}), we obtain
\bea
&&\iota_{N_K(X,Y)}\frac{\omega^3}{3!}\nonumber\\
&=&\frac{1}{4}\iota_X\varphi\wedge\ud\iota_YF(\varphi)-\frac{1}{4}\ud\iota_X\varphi\wedge\iota_YF(\varphi)+\frac{1}{4}\iota_X\iota_YF(\varphi)\wedge\ud\varphi \nonumber\\
&&+\iota_Y\varphi\wedge\iota_{KX}\ud\varphi+\iota_Y\iota_{KX}\ud\varphi\wedge\varphi+\frac{1}{4}\iota_Y\iota_X\ud F(\varphi)\wedge\varphi-(X\leftrightarrow Y)\nonumber\\
&=&\frac{1}{4}\iota_X\varphi\wedge(\cll_YF(\varphi)-\iota_Y\ud F(\varphi))-\frac{1}{4}(\cll_X\varphi-\iota_X\ud\varphi)\wedge\iota_YF(\varphi)+\frac{1}{4}\iota_X\iota_YF(\varphi)\wedge\ud\varphi\nonumber\\
&&+\frac{1}{4}\iota_Y\iota_X\ud F(\varphi)\wedge\varphi+\iota_Y\varphi\wedge\iota_{KX}\ud\varphi+\iota_Y\iota_{KX}\ud\varphi\wedge\varphi-(X\leftrightarrow Y)\label{crucial}.
\eea
Observe that
\bea
&&\iota_X\varphi\wedge\cll_YF(\varphi)-\cll_X\varphi\wedge\iota_YF(\varphi)-(X\leftrightarrow Y)\nonumber\\
&=&\iota_X\varphi\wedge\cll_YF(\varphi)+\cll_Y\varphi\wedge\iota_XF(\varphi)-(X\leftrightarrow Y)\nonumber\\
&=&\cll_Y(\iota_X\varphi\wedge F(\varphi)+\varphi\wedge\iota_XF(\varphi))-\cll_Y\iota_X\varphi\wedge F(\varphi)-\varphi\wedge\cll_Y\iota_XF(\varphi)-(X\leftrightarrow Y)\nonumber\\
&=&(\cll_X\iota_Y-\cll_Y\iota_X)\varphi\wedge F(\varphi)+\varphi\wedge(\cll_X\iota_Y-\cll_Y\iota_X)F(\varphi),
\eea
where we used that $\iota_X\varphi\wedge F(\varphi)+\varphi\wedge\iota_XF(\varphi)=0$ from (\ref{o1}). Since
\[\cll_X\iota_Y-\cll_Y\iota_X=\iota_{[X,Y]}+\iota_Y\cll_X-\cll_Y\iota_X=\iota_{[X,Y]}+\iota_Y\iota_X\ud-\ud\iota_Y\iota_X,\]
we get
\bea
&&\iota_X\varphi\wedge\cll_YF(\varphi)-\cll_X\varphi\wedge\iota_YF(\varphi)-(X\leftrightarrow Y)\nonumber\\
&=&\iota_Y\iota_X\ud\varphi\wedge F(\varphi)+\varphi\wedge\iota_Y\iota_X\ud F(\varphi)-\ud\iota_Y\iota_X\varphi\wedge F(\varphi)-\varphi\wedge\ud\iota_Y\iota_X F(\varphi).\nonumber
\eea
By taking a $\ud$ of (\ref{o22}), we get
\[\ud\iota_Y\iota_X\varphi\wedge F(\varphi)+\varphi\wedge\ud\iota_Y\iota_X F(\varphi)=\iota_Y\iota_X\varphi\wedge\ud F(\varphi)+\ud\varphi\wedge\iota_Y\iota_XF(\varphi),\]
hence
\bea
&&\iota_X\varphi\wedge\cll_YF(\varphi)-\cll_X\varphi\wedge\iota_YF(\varphi)-(X\leftrightarrow Y)\nonumber\\
&=&\iota_Y\iota_X\ud\varphi\wedge F(\varphi)+\varphi\wedge\iota_Y\iota_X\ud F(\varphi)-\iota_Y\iota_X\varphi\wedge\ud F(\varphi)-\ud\varphi\wedge\iota_Y\iota_XF(\varphi).\nonumber
\eea
Substitute it back into (\ref{crucial}), we get
\bea
&&\iota_{N_K(X,Y)}\frac{\omega^3}{3!}\nonumber\\
&=&\frac{1}{8}(\iota_Y\iota_X\ud\varphi\wedge F(\varphi)+\varphi\wedge\iota_Y\iota_X\ud F(\varphi)-\iota_Y\iota_X\varphi\wedge\ud F(\varphi)-\ud\varphi\wedge\iota_Y\iota_XF(\varphi))\nonumber\\
&&-\frac{1}{4}\iota_X\varphi\wedge\iota_Y\ud F(\varphi)+\frac{1}{4}\iota_X\ud\varphi\wedge\iota_YF(\varphi)-\frac{1}{4}\ud\varphi\wedge\iota_Y\iota_XF(\varphi)+\frac{1}{4}\varphi\wedge\iota_Y\iota_X\ud F(\varphi)\nonumber\\
&&+\iota_Y\varphi\wedge\iota_{KX}\ud\varphi+\iota_Y\iota_{KX}\ud\varphi\wedge\varphi-(X\leftrightarrow Y).\label{tosimp}
\eea
By taking double interior product of the equations $\ud\varphi\wedge F(\varphi)=0$, $\varphi\wedge\ud F(\varphi)=0$ and $\varphi\wedge\ud\varphi=0$, we get
\bea
\iota_X\ud\varphi\wedge\iota_YF(\varphi)-(X\leftrightarrow Y)&=&\iota_Y\iota_X\ud\varphi\wedge F(\varphi)+\ud\varphi\wedge\iota_Y\iota_XF(\varphi),\nonumber\\
-\iota_X\varphi\wedge\iota_Y\ud F(\varphi)-(X\leftrightarrow Y)&=&\iota_Y\iota_X\varphi\wedge\ud F(\varphi)+\varphi\wedge\iota_Y\iota_X\ud F(\varphi),\nonumber\\
\iota_Y\varphi\wedge\iota_{KX}\ud\varphi-\iota_Y\iota_{KX}\ud\varphi\wedge\varphi &=&-\frac{1}{2}\iota_Y\ud\varphi\wedge\iota_XF(\varphi)-\frac{1}{2}\ud\varphi\wedge\iota_Y\iota_XF(\varphi).\nonumber
\eea
From these identities we can conclude that
\bea
&&\iota_{N_K(X,Y)}\frac{\omega^3}{3!}\nonumber\\
&=&\varphi\wedge\iota_Y\iota_X\ud F(\varphi)+\iota_Y\iota_X\ud\varphi\wedge F(\varphi)-\ud\varphi\wedge\iota_Y\iota_X F(\varphi)+2\varphi\wedge(\iota_Y\iota_{KX}-\iota_X\iota_{KY})\ud\varphi.\nonumber
\eea
\end{proof}
As a corollary, we have proved
\begin{cor}
If $\varphi$ is integrable and $F$-integrable, then it is $K$-integrable.
\end{cor}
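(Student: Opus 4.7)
The corollary is an immediate consequence of the identity (\ref{id}) established in the preceding theorem, so the plan is to simply substitute the assumptions and invoke non-degeneracy of $\omega$.

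First, I would observe that the right-hand side of (\ref{id}) is a sum of four terms, each of which contains $\ud\varphi$ or $\ud F(\varphi)$ as a factor: namely $\iota_Y\iota_X\ud\varphi\wedge F(\varphi)$, $-\ud\varphi\wedge\iota_Y\iota_XF(\varphi)$, $2\varphi\wedge(\iota_Y\iota_{KX}-\iota_X\iota_{KY})\ud\varphi$, and $\varphi\wedge\iota_Y\iota_X\ud F(\varphi)$. Under the hypotheses $\ud\varphi=0$ and $\ud F(\varphi)=0$, every one of these four terms vanishes identically, so that
\[\iota_{N_K(X,Y)}\frac{\omega^3}{3!}=0\]
for all smooth vector fields $X$ and $Y$ on $M$.

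Next I would use the fact that $\omega$ is a symplectic form, so $\omega^3/3!$ is a nowhere-vanishing top-degree form on $M$. Therefore the contraction map $Z\mapsto\iota_Z(\omega^3/3!)$ from $TM$ to $\bigwedge^5T^*M$ is a pointwise isomorphism, and hence injective on sections. This forces $N_K(X,Y)=0$ for all $X,Y$, which is by definition $K$-integrability of $\varphi$.

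There is essentially no obstacle here, since the heavy lifting was done in proving (\ref{id}). The only thing to verify is that the four terms on the right-hand side genuinely vanish under the two assumptions, and the two terms of the form $2\varphi\wedge\iota_Y\iota_{KX}\ud\varphi$ obviously do because they contain the factor $\ud\varphi=0$ regardless of whether $KX$ is a vector field. No further computation, no additional identities, and no extra hypotheses on $\varphi$ (such as which orbit it lies in) are needed.
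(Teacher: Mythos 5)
Your proof is correct and follows exactly the same route as the paper: the corollary is drawn directly from identity (\ref{id}) by noting that every term on its right-hand side contains a factor of $\ud\varphi$ or $\ud F(\varphi)$, so the hypotheses force $\iota_{N_K(X,Y)}\frac{\omega^3}{3!}=0$, and non-degeneracy of the volume form $\frac{\omega^3}{3!}$ then gives $N_K=0$. Nothing is missing.
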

\begin{rmk}\label{rmk}
Notice that $K$-integrability implies that
\[[KX,KY]=K([KX,Y]+[X,KY]-K[X,Y])\in\img~K.\]
It follows from the Frobenius theorem that the distribution $\img~K(\varphi)$ is integrable. However, it seems that $K$-integrability is a stronger condition than that $\img~K(\varphi)$ is integrable as a distribution. For example, if $\varphi$ belongs to the orbit $\clo_0^\pm$ pointwise, then $\img~K(\varphi)=\ker F(\varphi)$, hence the integrability of $\img~K(\varphi)$ is guaranteed solely by the $F$-integrability of $\varphi$. 
\end{rmk} 

Next, we also prove that
\begin{thm}
If $\varphi$ is integrable and $F$-integrable, then it is $Q$-integrable.
\end{thm}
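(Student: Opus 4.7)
The plan is to combine the preceding Corollary (which gives $N_K \equiv 0$ under our hypotheses) with the algebraic relation $K^2 = (Q/4)\cdot\id$ to pin down $\ud Q$ pointwise. The key is the purely algebraic identity
\[
N_K(X, KY) + K\,N_K(X, Y) \;=\; X\!\bigl(\tfrac{Q}{4}\bigr)\, KY \;-\; (KX)\!\bigl(\tfrac{Q}{4}\bigr)\, Y
\]
valid for any vector fields $X, Y$. I would prove this by a direct expansion of the definition of $N_K$, substituting $K^2 = (Q/4)\cdot\id$ and applying the Leibniz rule $[U, fV] = f[U, V] + U(f)\, V$ with $f = Q/4$; after the substitution, every term except those containing a derivative of $f$ cancels between the two summands on the left (in particular the $[KX, KY]$ contributions cancel pairwise, as do the $[X,KY]$, $[KX,Y]$, and $K[X,Y]$ terms).

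Applying the Corollary, $N_K \equiv 0$, so the identity collapses to $X(f)\, KY = (KX)(f)\, Y$ for all vector fields $X, Y$, with $f := Q/4$. Fix $p\in M$ at which $\ud Q(p)\ne 0$ and choose $X$ with $X(f)(p) = 1$; the equation then reduces to $KY|_p = c\cdot Y|_p$ for every $Y\in T_pM$, with $c := (KX)(f)(p)$, i.e.\ $K|_p = c\cdot\id$. I would then rule this case out: by (\ref{F}), $\iota_v F(\varphi)|_p = -2\iota_{Kv}\varphi|_p = -2c\,\iota_v\varphi|_p$ for every $v\in T_pM$, so $F(\varphi)|_p = -2c\,\varphi|_p$; wedging with $\varphi$ and using $\varphi\wedge\varphi = 0$ together with (\ref{Q}) gives $-Q(p)\,\nu = -2c\,(\varphi\wedge\varphi)|_p = 0$, so $Q(p) = 0$, and then $K^2 = (Q/4)\cdot\id$ forces $c = 0$.

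Hence at every point with $\ud Q\ne 0$ we must have $Q = 0$; equivalently $\ud Q \equiv 0$ on the open set $\{Q\ne 0\}$, so $Q$ is locally constant there and assumes some nonzero value on each connected component. If such a component $W$ were a proper subset of $M$, continuity of $Q$ would give $Q = c_W\ne 0$ on all of $\overline W$, but $\partial W \subset \{Q = 0\}$, contradicting $c_W\ne 0$. By connectedness of $M$, either $\{Q\ne 0\} = \emptyset$ (so $Q\equiv 0$) or $\{Q\ne 0\} = M$, and in either case $Q$ is globally constant. The main obstacle is discovering the algebraic identity in the first paragraph; once it is in hand the remaining steps are routine.
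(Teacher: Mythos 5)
Your proof is correct, and it takes a genuinely different route from the paper's. The paper decomposes $M$ into $\{Q<0\}\coprod\{Q=0\}\coprod\{Q>0\}$ and works on the stable locus: there Hitchin's construction gives an almost complex structure with $F(\varphi)=|\varphi|^2\hat\varphi$, and the fact (imported from the Type IIA paper) that $\ud\hat\varphi$ is a $(2,2)$-form lets one read off $\ud|\varphi|^2=0$ from the bidegree decomposition of $\ud F(\varphi)=\ud|\varphi|^2\wedge\hat\varphi+|\varphi|^2\ud\hat\varphi$; the orbit $\clo_+$ is handled by the parallel para-complex theory. You bypass the complex geometry entirely. Your key identity
\[
N_K(X,KY)+K\,N_K(X,Y)=X\!\left(\tfrac{Q}{4}\right)KY-(KX)\!\left(\tfrac{Q}{4}\right)Y
\]
does check out: expanding both summands and substituting $K^2=\tfrac{Q}{4}\id$ from (\ref{square}), the terms $f[X,KY]$, $f[KX,Y]$, $fK[X,Y]$ and $K[KX,KY]$ each appear with opposite signs and cancel, leaving only the Leibniz contributions. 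Combined with the Corollary ($N_K\equiv0$), this pins down $\ud Q$ pointwise, and your exclusion of the degenerate alternative $K_p=c\cdot\id$ via $\varphi\wedge F(\varphi)=-Q\,\omega^3/3!$ and $\varphi\wedge\varphi=0$ is sound; the final clopen argument on $\{Q\neq0\}$ is the same connectedness step the paper leaves implicit. What your approach buys is self-containedness and uniformity: it needs no case split by the sign of $Q$, no appeal to external results on Type IIA structures or their split analogues, and it exposes $Q$-integrability as a direct consequence of $K$-integrability plus the algebraic relation $K^2=\tfrac{Q}{4}\id$ (so it would apply to any such endomorphism field). What the paper's route buys is the additional structural information that on the stable locus both $\ud|\varphi|^2=0$ and $\ud\hat\varphi=0$ hold separately, i.e.\ the almost complex structure is integrable, which is used elsewhere.
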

\begin{proof}
We need to prove that $Q(\varphi)$ is a constant on $M$ under the condition that $\ud\varphi=\ud F(\varphi)=0$. We can decompose $M$ as
\[M=\{Q<0\}\coprod\{Q=0\}\coprod\{Q>0\}.\]
As $\{Q<0\}$ and $\{Q>0\}$ are both open subsets of $M$, we only need to show that in every connected component of the set $\{Q\neq 0\}$, $Q$ is a constant. Let us assume that $U$ is a connected open subset of $\{Q<0\}$. In this case, we know that $\varphi$ defines an almost complex structure $J$ compatible with $\omega$ and that $F(\varphi)=|\varphi|^2\hat\varphi$ on $U$. Since $\ud\varphi=0$, in the language of \cite{fei2021b}, we get Type IIA structure $(\omega,\varphi)$ on $M$. In \cite[pp. 791-792]{fei2021b} we have shown that $\ud\hat\varphi$ is a $(2,2)$-form with respect to $J$. Consequently, we know that
\[\ud F(\varphi)=\ud|\varphi|^2\wedge\hat\varphi+|\varphi|^2\ud\hat\varphi\]
is exactly the decomposition of $\ud F(\varphi)$ into its $(3,1)+(1,3)$ and $(2,2)$ components. Therefore $\ud F(\varphi)=0$ implies that both $\ud|\varphi|^2=0$ and $\ud\hat\varphi=0$, which further implies that $Q=-|\varphi|^4$ is a constant and $J$ is integrable.

The situation that $U$ is contained in $\{Q>0\}$ is parallel, see the analogous construction outlined in \cite{fei2015b}.
\end{proof}
To summarize, we have proved that
\begin{thm}\label{integ}
If $\varphi$ is integrable and $F$-integrable, then it is also $K$-integrable and $Q$-integrable. Moreover, $\varphi$ is either pointwise stable or pointwise unstable. It is impossible for $\varphi$ to be stable at some points while unstable at other places. 
\end{thm}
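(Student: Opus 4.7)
The plan is to observe that the first assertion of the theorem is immediate from the two results just proved: the Corollary above gives $K$-integrability under the hypotheses $\ud\varphi = 0$ and $\ud F(\varphi) = 0$, and the Theorem immediately preceding gives $Q$-integrability under the same hypotheses. Thus the only content left to verify is the pointwise dichotomy between stability and unstability.

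My approach is to read this off directly from the orbit classification. By the definition of $Q$-integrability, $Q(\varphi)$ is constant on $M$, and since $M$ is connected there is a single scalar $c \in \rr$ with $Q(\varphi) \equiv c$. I then invoke the stratification result from Section 2, namely the characterization $\{\varphi : Q(\varphi) = 0\} = \clo_0 \coprod \clo_1 \coprod \clo_3 \coprod \clo_6$, together with the fact that the two open stable orbits $\clo_-$ and $\clo_+$ make up exactly the loci $\{Q < 0\}$ and $\{Q > 0\}$ respectively. The three cases are then trivial: if $c > 0$ then $\varphi(p) \in \clo_+$ for every $p \in M$; if $c < 0$ then $\varphi(p) \in \clo_-$ for every $p$; and if $c = 0$ then $\varphi(p)$ lies in an unstable orbit at every $p$. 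In the first two cases $\varphi$ is pointwise stable, in the third pointwise unstable, so a mixed configuration is impossible.

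There is no real obstacle here, since all of the analytic content has already been absorbed into the proof of $Q$-integrability (through the Type IIA identification $F(\varphi) = |\varphi|^2 \hat\varphi$ on the region $\{Q < 0\}$ together with its para-complex analogue on $\{Q > 0\}$). The dichotomy itself is a purely algebraic consequence of the orbit stratification once $Q$ is known to be globally constant, and the argument amounts to little more than a case split on the sign of that constant.
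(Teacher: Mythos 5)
Your proposal is correct and follows essentially the same route as the paper: the paper states this theorem as a summary of the immediately preceding Corollary ($K$-integrability) and Theorem ($Q$-integrability), and the stability dichotomy is exactly the observation that the globally constant value of $Q$ is either nonzero (so $\varphi$ lies in $\clo_-$ or $\clo_+$ everywhere) or zero (so $\varphi$ is pointwise unstable). Nothing further is needed.
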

\begin{rmk}
There are examples of $\varphi$ being both integrable and $F$-integrable with $Q(\varphi)=0$ everywhere such that $\varphi$ belongs to different non-stable orbits at different points.
\end{rmk}

\begin{rmk}
Being both integrable and $F$-integrable is a very natural condition to impose. For example, when $\varphi$ belongs to the orbit $\clo_-^+(\mu)$ pointwise, being both integrable and $F$-integrable is equivalent to that the associated metric $q(\omega,\varphi)$ is both K\"ahler and Ricci-flat. If the underlying manifold is a K\"ahler Calabi-Yau 3-fold, such a $\varphi$ can always be found by running the Type IIA flow with the real part of any holomorphic volume form as initial data, see \cite[Theorem 9]{fei2021b}. This condition also fits well into what Hitchin calls the ``nonlinear Hodge theory''. In fact, on a closed symplectic 6-manifold $(M,\omega)$, one can introduce the a functional $I$ on the space of closed 3-forms within a fixed cohomology class:
\[I(\varphi)=\int_MQ(\varphi)\frac{\omega^3}{3!}.\]
One can show that the critical points of $I$ are exactly those 3-forms that are both integrable and $F$-integrable. Therefore it is naturally to introduce the following definition in the context of nonlinear Hodge theory.
\begin{dfn}
We say a primitive 3-form on $(M,\omega)$ is $F$-harmonic if it is both integrable and $F$-integrable.
\end{dfn}
\end{rmk}
 
\section{Degeneration of Calabi-Yau 3-folds and the SYZ conjecture}

In this section, we introduce a version of degeneration of Calabi-Yau structures and discuss its potential interactions with the Stroinger-Yau-Zaslow (SYZ) conjecture \cite{strominger1996}.\\

Roughly speaking, the SYZ conjecture predicts that near a large complex structure limit, which is a place where the complex structure of a Calabi-Yau manifold has maximal degeneration, the Calabi-Yau manifold admits a fibration of special Lagrangian tori with singularities. In addition, the base of the fibration carries a pair of dual Hessian structures (with singularities!) such that the local potentials solve a real Monge-Amp\`ere equation. Moreover, the mirror Calabi-Yau manifold could be constructed by dualizing the torus fibration while incorporating quantum corrections. The SYZ conjecture has served as a guiding principle to understand the mirror symmetry from a geometric point of view. There has been numerous work devoted to various aspects of the SYZ conjecture. We recommend the survey papers of Gross-Siebert \cite{gross2016} and Yang Li \cite{li2022b} and references therein for its recent developments.

The usual approach to the SYZ conjecture factors through the semi-flat geometry, which can be viewed as an asymptotic model of the Ricci-flat K\"ahler metrics in the regular part near a large complex structure limit. In this and the next section, we shall consider a weaker version of the SYZ picture, and answer the following question: How a Lagrangian foliation naturally arises from certain degenerations of complex structures on a 6-dimensional Calabi-Yau manifold, where semi-flatness is an automatic consequence of the special structures of ambient geometry.

The standard set-up for the SYZ conjecture is to consider a polarized degeneration of Calabi-Yau manifolds $\pi:(\mathcal{X},\mathcal{L})\to D$, where $D$ is the unit disc in $\ccc$ and $\mathcal{L}$ is a relative ample line bundle over $\mathcal{X}$. For $t\neq 0\in D$, $X_t:=\pi^{-1}(t)$ is a smooth connected Calabi-Yau manifold polarized by $\mathcal{L}|_{X_t}$. By choosing a suitable Hermitian metric on $\mathcal{L}$, the curvature of $\mathcal{L}$ defines a fiberwise K\"ahler metric $\omega_t$ on $X_t$. It is well-known that locally away from $t=0$, the map $\pi:\mathcal{X}\to D$ is a locally trivial fibration in the smooth category and under such an identification all $\omega_t$ have the same de Rham cohomology class.

In this paper, we shall adopt a slightly different version of degeneration of Calabi-Yau manifolds. For us, a degeneration of Calabi-Yau manifolds is a family of holomorphic volume forms $\{\Omega_t\}_{0<t\leq1}$ on a fixed symplectic manifold $(M,\omega)$. For any path $\alpha:[0,1]\to D$ with $\alpha(0)=0$, we may get a degeneration of Calabi-Yau manifolds in our sense from a polarized degeneration $\pi:(\mathcal{X},\mathcal{L})\to D$ as follows.

By an abuse of notations, let us identify $t\in[0,1]$ with $\alpha(t)\in D$ when necessary. Given $\alpha$ and $\pi$, we let $M=X_1$ be equipped with its associated K\"ahler form $\omega_1$. As the interval $(0,1]$ is contractible, there exists a smooth identification of $\pi|_{\pi^{-1}((0,1])}\to(0,1]$ with the natural projection $M\times(0,1]\to(0,1]$. Under such an identification, we get a family of symplectic forms $\{\omega_t\}_{0<t\leq 1}$ on $M$, all living in the same de Rham cohomology class. By the Moser's trick \cite{moser1965}, we can find a smooth family of diffeomorphisms $f_t$ of $M$ with $f_1=\textrm{Id}_M$ such that
\[f_t^*\omega_t=\omega_1\]
for all $0<t\leq 1$. In this way, we have shown that $(X_t,\omega_t)\cong(M,\omega:=\omega_1)$ under a choice suitable symplecticmorphisms. As the relative canonical bundle of $\pi$ is trivial, we may take a section of it as a family of holomorphic volume forms on $X_t$, which in turn gives rise to a family of holomorphic volume forms $\{\Omega_t\}_{0<t\leq 1}$ on $(M,\omega)$ under the above identification.

\begin{dfn}
A degeneration of Calabi-Yau structures $\{\Omega_t\}_{0<t\leq 1}$ on $(M,\omega)$ is called \emph{Ricci-flat}, if for every $t\in(0,1]$, the K\"ahler metric determined by $(M,\omega,\Omega_t)$ is Ricci-flat.
\end{dfn} 

Essentially, every degeneration of Calabi-Yau structures on $(M,\omega)$ is Ricci-flat due to the following result.

\begin{thm}
Let $\{\Omega_t\}_{0<t\leq 1}$ be a degeneration of Calabi-Yau structures on a symplectic 6-manifold $(M,\omega)$. Then there is a family of diffeomorphisms $f_t$ for $0<t\leq 1$ such that $\{f_t^*\Omega_t\}_{0<t\leq 1}$ is a Ricci-flat degeneration of Calabi-Yau structures on $(M,\omega)$.
\end{thm}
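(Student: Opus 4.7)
The strategy is to apply Yau's solution of the Calabi conjecture pointwise in $t$ to obtain, for each $t$, a Ricci-flat Kähler form $\tilde\omega_t$ cohomologous to $\omega$, and then use Moser's trick in parametric form to pull back $\tilde\omega_t$ to $\omega$. The diffeomorphism $f_t$ produced by Moser will then pull back the entire Calabi-Yau package to a Ricci-flat structure compatible with the original symplectic form $\omega$.

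More precisely, the plan proceeds in three steps. First, for each $t\in(0,1]$, the holomorphic volume form $\Omega_t$ together with $\omega$ determines a compatible complex structure $J_t$ via the Hitchin-type construction recalled in Section 2, and $(M,\omega,J_t)$ is a compact Kähler manifold with trivial canonical bundle trivialized by $\Omega_t$. By Yau's theorem, there is a unique smooth function $\phi_t$ (normalized to have zero average, say) such that
\[\tilde\omega_t:=\omega+\sqrt{-1}\,\partial_t\bar\partial_t\phi_t\]
is a Kähler form in the class $[\omega]$ (with $\partial_t$ and $\bar\partial_t$ taken with respect to $J_t$) whose associated metric is Ricci-flat; equivalently, $\tilde\omega_t^3/3!=c_t\cdot(\sqrt{-1})^{?}\Omega_t\wedge\bar\Omega_t$, where $c_t$ is determined by integrating both sides against $[\omega]^3$. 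Smooth dependence of $\phi_t$ on $t$ follows from the standard implicit function theorem argument applied to the linearization of the complex Monge--Ampère operator, whose kernel consists of constants and is therefore handled by the normalization. Second, for each fixed $t$, the linear interpolation $\omega_s^t:=(1-s)\omega+s\tilde\omega_t$, $s\in[0,1]$, is a family of Kähler forms with respect to the same complex structure $J_t$, hence is symplectic and cohomologous to $\omega$. Moser's trick then yields a smooth family of diffeomorphisms $\{h_s^t\}_{s\in[0,1]}$ with $h_0^t=\id_M$ and $(h_s^t)^*\omega_s^t=\omega$; set $f_t:=h_1^t$, so that $f_t^*\tilde\omega_t=\omega$.

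Third, we put the pieces together. Since $f_t$ is a diffeomorphism, the quadruple $(M,\omega,f_t^*J_t,f_t^*\Omega_t)$ is obtained from the Ricci-flat Kähler--Calabi--Yau quadruple $(M,\tilde\omega_t,J_t,\Omega_t)$ by pull-back, hence is itself Ricci-flat Kähler with holomorphic volume form $f_t^*\Omega_t$. Thus $\{f_t^*\Omega_t\}_{0<t\leq1}$ is a Ricci-flat degeneration on $(M,\omega)$. The main obstacle I anticipate is not any single step — Yau's theorem and Moser's trick are both classical — but rather establishing smooth dependence on $t$ throughout, so that $f_t$ itself is smooth in $t$: one must verify that the Monge--Ampère solutions $\phi_t$ depend smoothly on the parameter (which reduces to elliptic regularity together with smooth dependence of $J_t$ on $t$), and that the parametric Moser construction can be carried out smoothly in two variables $(s,t)$. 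A secondary subtlety is the normalization: the constant $c_t$ above may vary with $t$, which is harmless for Ricci-flatness but should be recorded in case one later wants to track scaling of $\Omega_t$.
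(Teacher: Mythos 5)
Your argument is correct, but it takes a genuinely different technical route from the paper. The paper's proof is a one-line appeal to the Type IIA flow of the author's earlier work: running that flow on the fixed $(M,\omega)$ with $\varphi_t=\textrm{Re}\,\Omega_t$ as initial data produces a diffeomorphism $f_t$, isotopic to the identity, for which the metric determined by $(M,\omega,f_t^*\varphi_t)$ is K\"ahler Ricci-flat. You instead fix the complex structure $J_t$, solve the complex Monge--Amp\`ere equation (Yau) to obtain a Ricci-flat K\"ahler form $\tilde\omega_t\in[\omega]$, and then gauge-fix with Moser so that $f_t^*\tilde\omega_t=\omega$. These are dual implementations of the same idea: the flow keeps $\omega$ fixed and deforms $\varphi$ within its cohomology class, while you keep $J_t$ fixed, deform $\omega$ within its class, and then undo that deformation by a diffeomorphism. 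Your version is more elementary in that it uses only the classical elliptic result plus Moser rather than the convergence theory of a parabolic flow; the paper's route buys brevity and consistency with its $F$-harmonic framework. Two small points you should make explicit to close the argument: (i) the metric ``determined by $(M,\omega,f_t^*\Omega_t)$'' in the sense of the paper's definition is $\omega(\cdot,J(f_t^*\varphi_t)\cdot)$, and since Hitchin's construction is natural under diffeomorphisms one has $J(f_t^*\varphi_t)=f_t^*J_t$, whence this metric equals $f_t^*\bigl(\tilde\omega_t(\cdot,J_t\cdot)\bigr)$ and is therefore Ricci-flat; (ii) $f_t^*\varphi_t$ remains $\omega$-primitive because $\tilde\omega_t\wedge\varphi_t=0$ (a $(1,1)$-form wedged with the real part of a $(3,0)$-form vanishes in complex dimension three). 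The smooth dependence on $t$ that you flag is a genuine but standard matter, handled exactly as you describe via the implicit function theorem for the Monge--Amp\`ere operator and a two-parameter Moser construction.
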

\begin{proof}
Write $\Omega_t=\varphi_t+i\hat\varphi_t$, where $\varphi_t$ and $\hat\varphi_t$ are the real and imaginary parts of $\Omega_t$ respectively. From Section 2, we know that all the information encoded in the degeneration of complex structures can be recovered from the family $\varphi_t$, which is a family of real primitive 3-forms on $M$ belonging to the orbit $\clo_-^+(\mu)$ pointwise. As the $\Omega_t$ is holomorphic with respect to the associated integrable complex structure, we have $\ud\varphi_t=\ud\hat\varphi_t=0$. By running the Type IIA flow \cite[Theorem 9]{fei2021b} using $\varphi_t$ as the initial data on $(M,\omega)$, we know there exists a diffeomorphism $f_t$ of $(M,\omega)$, isotopic to identity, such that the K\"ahler metric (or the Type IIA structure, in the language of \cite{fei2021b}) associated to the triple $(M,\omega,f_t^*\varphi_t)$, is Ricci-flat.
\end{proof}
\begin{rmk}
From the viewpoint of the above proof and previous discussion, a Ricci-flat degeneration of Calabi-Yau structures $\{\Omega_t\}$ on $(M,\omega)$ is nothing but a family of $F$-harmonic 3-forms $\varphi_t$ belonging to the orbit $\clo_-^+(\mu)$ pointwise.
\end{rmk}

It is then natural to introduce the following definition:
\begin{dfn}
A wonderful degeneration of Calabi-Yau structures $\{\Omega_t\}_{0<t\leq 1}$ on a symplectic 6-manifold $(M,\omega)$ is a Ricci-flat degeneration of Calabi-Yau structures such that the convergence
\[\lim_{t\to 0}\varphi_t=\varphi_0\]
holds smoothly on an open dense subset $M^\circ$ of $M$ with the additional property that $\varphi_0$ belongs to the orbit $\clo_0^+$ pointwise on $(M^\circ,\omega)$. Here, $\varphi_t$ is the real part of the holomorphic volume form $\Omega_t$.
\end{dfn}

For a wonderful degeneration of Calabi-Yau manifolds, there are a few points worth mentioning.
\begin{rmk}~
\begin{itemize}
\item As the convergence $\lim_{t\to 0}\varphi_t=\varphi_0$ is smooth, the 3-form $\varphi_0$ is automatically $F$-harmonic on $(M^\circ,\omega)$.
\item In a way, it is helpful to think $M^\circ$ as the ``regular part'' of the degeneration of complex structures in consideration, whose geometry is determined by the triple $(M^\circ,\omega,\varphi_0)$. On the other hand, the ``singular locus'' $M\setminus M^\circ$ can be written as a disjoint union:
    \[M\setminus M^\circ=S_r\coprod S_s,\]
    where $S_s$ is the locus where $\lim_{t\to 0}\varphi_t$ does not exist, while $S_r$ is the subset where $\lim_{t\to 0}\varphi_t$ does exist but not belonging to the orbit $\clo_0^+$.
\item The condition that $\varphi_0$ belongs to the orbit $\clo_0^+$ is a very natural assumption. Because $\varphi_0$ is $F$-harmonic, we must have $Q(\varphi_0)$ is a constant on $M^\circ$. As $Q(\varphi_t)<0$ for all $t$, it is either $Q(\varphi_0)=0$ or $Q(\varphi_0)<0$. If $Q(\varphi_0)<0$, $\varphi_0$ would define a genuine Calabi-Yau structure which is not the kind of degeneration we expect. Therefore, the most interesting case is that $Q(\varphi_0)=0$ and in this scenario $\varphi_0$ must belong to the union of orbits $\clo_0^+\coprod\clo_1^+\coprod\left(\clo_3\cap\bigwedge^3_0V^*\right)\coprod\{0\}$. Our assumption is really about that $\varphi_0$ generically belongs to the most ``non-degenerate'' orbit among all possible degenerate orbits.
\item It is very important to consider the limit $\lim_{t\to0}\varphi_t$ instead of $\lim_{t\to0}\Omega_t$. As we shall see in the example below, it is possible for $\lim_{t\to0}\varphi_t$ to exist while $\lim_{t\to0}\Omega_t$ blowing up.
\end{itemize}
\end{rmk}

In order to understand a wonderful degeneration of Calabi-Yau structures, we shall first focus on the geometry associated to the triple $(M^\circ,\omega,\varphi_0)$, where $(M^\circ,\omega)$ is a symplectic 6-manifold, and $\varphi_0$ is an $F$-harmonic primitive 3-form on $M$ belonging to the orbit $\clo_0^+$ pointwise.

A direct consequence from Proposition \ref{cases} and Theorem \ref{integ} is that $\varphi_0$ naturally induces a Riemannian Lagrangian foliation on $(M^\circ,\omega)$ as stated in the following proposition.
\begin{prop}\label{Lag}
Let $(M^\circ,\omega)$ be a symplectic 6-manifold and $\varphi_0$ an $F$-harmonic primitive 3-form belonging to the orbit $\clo_0^+$ pointwise. Then $K=K(\varphi_0)$ is a smooth section of $\endo~TM^\circ$ such that $K^2=0$ and $\cll:=\ker K=\img~K$ defines a Lagrange foliation on $(M^\circ,\omega)$. In addition, the Nijenhuis tensor $N_K$ of $K$ vanishes, namely
\bea
N_K(X,Y)=K[KX,Y]+K[X,KY]-[KX,KY]=0\label{vanishing}
\eea
for any vector fields $X$ and $Y$. The natural symmetric bilinear form $q(\cdot,\cdot)=\omega(\cdot,K\cdot)$ induces a Riemannian metric on the vector bundle $TM^\circ/\cll$. The bundle $TM^\circ/\cll$ is canonically oriented so the volume form of the above Riemannian metric is exactly given by $\dfrac{1}{\sqrt{2}}F(\varphi_0)$. Through the bundle isomorphism $K:TM^\circ/\cll\cong \cll$, we also get a canonical Riemannian metric $g_\cll$ together with an orientation on the Lagrangian foliation $\cll$ such that
\[g_\cll(X,Y)=\omega(K^{-1}X,Y)\]
for any vector fields tangent to $\cll$. Clearly the definition of $g_\cll$ above is independent of the choice of preimage $K^{-1}X$ of $X$ since $\cll$ is Lagrangian.
\end{prop}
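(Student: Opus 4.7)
The plan is to combine the pointwise algebraic structure of $\clo_0^+$ recorded in Proposition \ref{cases} with the Nijenhuis identity (\ref{id}); the assertions then assemble in an essentially modular way, and the only non-formal step is pinning down the constant $\tfrac{1}{\sqrt{2}}$ in the volume-form identification.

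First I would establish the algebraic parts pointwise. Because $\varphi_0\in\clo_0^+$ pointwise and $\clo_0\subset\{Q=0\}$, the identity $K^2=\tfrac{1}{4}Q\cdot\id_V$ forces $K^2=0$ identically, so $\img K\subset\ker K$; equality follows from the rank data $\dim\ker K=\dim\img K=3$ recorded in the second table of Section 2. Proposition \ref{cases}(4) then tells us that this common subspace $L$ is Lagrangian at each point, that $L=\ker F(\varphi_0)$, and that $q=q(\omega,\varphi_0)$ descends to a positive-definite symmetric bilinear form on $V/L$. Smoothness of $K$, $F(\varphi_0)$ and $q$ is automatic from their polynomial definitions in $\varphi_0$ and $\omega$, so $\cll:=\ker K=\img K$ is a smooth rank-$3$ Lagrangian subbundle of $TM^\circ$ and $q$ gives a Riemannian bundle metric on $TM^\circ/\cll$.

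Next I would show that $\cll$ is involutive and that $N_K=0$. The vanishing of $N_K$ is immediate from (\ref{id}): under $\ud\varphi_0=0$ and $\ud F(\varphi_0)=0$ all four terms on the right drop out, and since $\omega^3$ is nowhere zero we conclude $N_K\equiv 0$. For involutivity of $\cll$, the cleanest route is via the identification $\cll=\ker F(\varphi_0)$: for vector fields $X,Y$ annihilating $F(\varphi_0)$ under contraction, the condition $\ud F(\varphi_0)=0$ gives $\cll_X F(\varphi_0)=\ud\iota_XF(\varphi_0)+\iota_X\ud F(\varphi_0)=0$, whence $\iota_{[X,Y]}F(\varphi_0)=\cll_X\iota_YF(\varphi_0)-\iota_Y\cll_XF(\varphi_0)=0$, so $[X,Y]\in\ker F(\varphi_0)=\cll$. (Alternatively one can combine $\cll=\img K$ with $N_K=0$ as in Remark \ref{rmk}.) Frobenius then produces the Lagrangian foliation.

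Finally I would pin down the normalization and transport everything to $\cll$. Since all quantities are algebraic and $\Sp(V,\omega)$-equivariant, it suffices to verify $\vol_q=\tfrac{1}{\sqrt{2}}F(\varphi_0)$ in the normal form $\omega=\sum_j\ud x^j\wedge\ud y^j$, $\varphi_0=\ud x^1\wedge\ud y^2\wedge\ud y^3+\ud x^2\wedge\ud y^3\wedge\ud y^1+\ud x^3\wedge\ud y^1\wedge\ud y^2$ of Section 2: using $K(\pt_{y^j})=-2\pt_{x^j}$ (after trivializing $\bigwedge^6V^*$ by $\omega^3/3!$) one finds $q([\pt_{y^j}],[\pt_{y^k}])=2\delta_{jk}$, so the induced volume form on $V/L$ in the basis $\{[\pt_{y^j}]\}$ is $2\sqrt{2}\,\ud y^1\wedge\ud y^2\wedge\ud y^3$, while in the same frame $F(\varphi_0)=4\,\ud y^1\wedge\ud y^2\wedge\ud y^3$, giving both the ratio $\tfrac{1}{\sqrt{2}}$ and the canonical orientation on $V/L$. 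Pushing through the bundle isomorphism $K:TM^\circ/\cll\to\cll$ transports the orientation and yields the metric $g_\cll$; well-definedness of $g_\cll(X,Y)=\omega(K^{-1}X,Y)$ hinges precisely on the Lagrangian property of $\cll$ (so that the ambiguity in $K^{-1}X$ by elements of $\ker K=\cll$ is killed by pairing with $Y\in\cll$), and symmetry of $g_\cll$ follows from that of $q$. The main potential obstacle is the explicit normal-form computation that fixes the constant $\tfrac{1}{\sqrt{2}}$; everything else reduces cleanly to combining previously stated results with standard Frobenius and Cartan-calculus arguments.
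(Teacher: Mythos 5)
Your proposal is correct and follows essentially the same route the paper intends: the paper gives no separate proof, presenting the proposition as a direct consequence of Proposition \ref{cases}(4) (the pointwise algebra on $\clo_0^+$), Theorem \ref{integ} and identity (\ref{id}) (vanishing of $N_K$ under $F$-harmonicity), and the involutivity argument via $\cll=\ker F(\varphi_0)$ already sketched in Remark \ref{rmk}. Your normal-form computation confirming the constant $\tfrac{1}{\sqrt{2}}$ (namely $\sqrt{\det(2\delta_{jk})}=2\sqrt{2}=\tfrac{4}{\sqrt{2}}$) is exactly the verification the paper leaves implicit, and it checks out.
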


As a toy case, we present an example of wonderful degeneration on the 6-torus, where all the geometries are flat. A more sophisticated example will be analyzed in Section 6. 

\begin{ex}
Let $T=\rr^2/\zz^2$ be the standard 2-torus equipped with the symplectic form $\omega=\ud x\wedge\ud y$. For any $\tau$ in the upper half plane, consider the complex structure $J_\tau$ on $T$ defined by the holomorphic $(1,0)$-form $\ud x+\tau\ud y$ on $T$. It is not hard to see that as a Riemann surface, $(T,J_\tau)$ is biholomorphic to $\ccc/(\zz+\zz\tau)$, the elliptic curve with period $\tau$. For this reason we shall write $E_\tau$ to denote the Calabi-Yau 1-fold $(T,\omega,J_\tau)$. 

Now let $M_\tau=E_\tau\times E_\tau\times E_\tau$ with product complex and symplectic structures. It is easy to see that $M_\tau$ is a Calabi-Yau 3-fold with K\"aher Ricci-flat metric. Moreover, it is well-known that the family $M_\tau$ with $\tau\to\sqrt{-1}\infty$ is a large complex structure limit. For example, it is indicated in \cite{leung2005} that under the limit $\tau\to\sqrt{-1}\infty$, the geometry of $M_\tau$ collapses to a 3-dimensional base.

To fit in our framework, we also work with the parameter $t=\dfrac{\sqrt{-1}}{\tau}$, where $t>0$ is a real parameter. Consider the following holomorphic volume form $\Omega_t$ on $M_\tau$:
\[\Omega_t=\frac{1}{\tau^2}(\ud x^1+\tau\ud y^1)\wedge(\ud x^2+\tau\ud y^2)\wedge(\ud x^3+\tau\ud y^3),\]
with $\varphi_t=\textrm{Re}~\Omega_t$. In this set-up we have
\[\lim_{t\to0}\varphi_t=\ud x^1\wedge\ud y^2\wedge\ud y^3+\ud x^2\wedge\ud y^3\wedge\ud y^1+\ud x^3\wedge\ud y^1\wedge\ud y^2=:\varphi_0\]
belongs to the orbit $\clo_0^+$ at every point. Associated to $\varphi_0$ there is a natural Lagrangian foliation 
\[\mathcal{L}=\ker K(\varphi_0)=\spann\left\{\frac{\p}{\p x^1},\frac{\p}{\p x^2},\frac{\p}{\p x^3}\right\}.\]
In fact, in this example $\mathcal{L}$ is a Lagrangian torus fibration over a 3-torus base.
\end{ex}

As one can observe in this example, in order to have a wonderful degeneration, it is crucial to choose the right normalization for $\Omega_t$ hence $\varphi_t$. An alternative normalization would be $\varphi'_t=F(\varphi_t)=|\varphi_t|^2\hat\varphi_t$. For such a choice, the limit
\[\lim_{t\to 0}\varphi'_t=F(\varphi_0)\]
still exists, however it belongs to the orbit $\clo_3$. We shall see in the next section that the orbit $\clo_0^+$ leads to much richer geometric structures than the orbit $\clo_3$ does.

Another noteworthy phenomenon is that the limit $\lim_{t\to 0}\Omega_t$ does not exist because
\[\lim_{t\to 0}\hat\varphi_t=\lim_{t\to 0}\frac{F(\varphi_t)}{\sqrt{-Q(\varphi_t)}}\]
blows up as $F(\varphi_0)$ is nonzero and $\lim_{t\to 0}Q(\varphi_t)=Q(\varphi_0)=0$.

\section{Geometry of $(M^\circ,\omega,\varphi_0)$}

In this section, we study the geometry associated to the triple $(M^\circ,\omega,\varphi_0)$ where $(M^\circ,\omega)$ is a symplectic 6-manifold and $\varphi_0$ an $F$-harmonic 3-form belonging to the orbit $\clo_0^+$ pointwise. Such a geometry can be viewed as the Calabi-Yau version of the more general nilpotent Jordan(-K\"ahler) geometry developed in \cite{fei2024b}.

As seen in Proposition \ref{Lag}, a main player in this set-up is the Lagrangian foliation $\cll$ associated to $(M^\circ,\omega,\varphi_0)$. It turns out that there are very rich geometric structures on the leaves of the Lagrangian foliation $\cll$. In particular we have the following theorem.
\begin{thm}\label{structure}
Let $L$ be a leaf, namely a maximal integral (immersed) submanifold, of the Lagrangian foliation $\cll$. Then $L$ is a canonically oriented Riemannian manifold. Moreover, $L$ carries a canonical affine structure such that the natural Riemannian metric induced from $g_\cll$ is a Hessian metric with parallel volume form. In other words, in any local affine coordinate system on $L$, the metric is the Hessian of a locally defined convex function $h$ such that it solves a real Monge-Amp\`ere equation with constant right hand side.
\end{thm}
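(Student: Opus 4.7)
My plan is to exhibit a Hessian potential $h$ explicitly in local adapted Darboux coordinates and then read off all the asserted properties. Near any point of $L$, the Darboux--Weinstein theorem for Lagrangian foliations (or, concretely, any local Lagrangian section transverse to $\cll$) furnishes coordinates $(x^1,x^2,x^3,y^1,y^2,y^3)$ with $\omega=\sum_i \ud x^i\wedge \ud y^i$ and the leaves of $\cll$ given by $\{y^j=\textrm{const}\}$. A direct check using the Darboux relations shows that any two such adapted charts are related by $\tilde x=M(y)x+N(y)$, $\tilde y=\tilde y(y)$, with smooth $M(y)\in\Gl_3(\rr)$ and $N(y)\in\rr^3$; consequently the restrictions of $x^1,x^2,x^3$ to $L$ define a well-posed flat affine structure on $L$ (the classical Weinstein--Bott affine structure of a Lagrangian foliation), whose induced orientation agrees with the one supplied by Proposition \ref{Lag}.

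Next I would use the action of the parabolic $P\subset\Sp(V,\omega)$ preserving $T\cll$ to put $\varphi_0$ into the local form
\[
\varphi_0=\tfrac{1}{2}\,\epsilon_{jkl}\,\tilde A^{il}(x,y)\,\ud x^i\wedge \ud y^j\wedge \ud y^k,
\]
where $\tilde A(x,y)$ is a smooth symmetric $3\times 3$ matrix-valued function. The justification is a dimension count in the algebraic classification of Section 2: the subspace of $\clo_0^+\cap\bigwedge^3_0V^*$ consisting of 3-forms with $\ker F=T\cll$ is a single $P$-orbit, of which this pure $(1,2)$-form is a representative. A direct computation, which reproduces the Section~2 normal form at $\tilde A=I$, then gives (under the identification $\bigwedge^6 V^*\cong\rr$ via $\omega^3/6$)
\[
K(\varphi_0)\!\left(\tfrac{\pt}{\pt y^r}\right)=-2\,\textrm{cof}(\tilde A)_{rn}\,\tfrac{\pt}{\pt x^n},\qquad F(\varphi_0)=4\det\tilde A(x,y)\cdot \ud y^1\wedge \ud y^2\wedge \ud y^3.
\]

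The integrability conditions then decode into two PDEs on $\tilde A$. Decomposing $\ud\varphi_0=0$ by bi-degree, its $(2,2)$-component reads $\pt_{x^m}\tilde A^{ij}=\pt_{x^i}\tilde A^{mj}$; together with the symmetry of $\tilde A$ in $(i,j)$, this makes the tensor $\pt_{x^m}\tilde A^{ij}$ totally symmetric in $(m,i,j)$, and the Poincar\'e lemma supplies a local function $H(x,y)$ with $\tilde A^{ij}=\pt^2H/\pt x^i\pt x^j$. The condition $\ud F(\varphi_0)=0$ forces $\Delta(y):=\det\tilde A$ to depend only on $y$, so $\Delta$ is constant along each leaf. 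Combining these with $g_\cll(X,Y)=\omega(K^{-1}X,Y)$ and the formula for $K$ above gives
\[
g_\cll\!\left(\tfrac{\pt}{\pt x^i},\tfrac{\pt}{\pt x^j}\right)=\frac{\tilde A_{ij}(x,y)}{2\,\Delta(y)},
\]
which on $L=\{y=c\}$ is the Hessian of $h(x):=H(x,c)/(2\Delta(c))$ in the affine coordinates $x^i$. The associated volume form on $L$ equals $(8\Delta(c)^2)^{-1/2}\,\ud x^1\wedge \ud x^2\wedge \ud x^3$, whose coefficient is constant on $L$ and hence affine-parallel, while $\det\textrm{Hess}(h)=\det\tilde A/(2\Delta)^3=1/(8\Delta(c)^2)$ is likewise constant on $L$; this is the real Monge--Amp\`ere equation with constant right-hand side.

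The main obstacle, as I see it, is smoothness of the normal-form reduction in the second paragraph: while $P$ acts transitively on the algebraic fibre over $T\cll$ pointwise, one must produce a smooth $P$-valued gauge transformation on a whole neighbourhood of $L$, not merely one pointwise. I would address this by a standard application of the implicit function theorem to the $P$-action on the bundle of such 3-forms, or bypass the normal form altogether by performing the bi-graded computation of $\ud\varphi_0=0$ and $\ud F(\varphi_0)=0$ with all three bi-degree components of $\varphi_0$ present and absorbing the extra terms using the residual Darboux gauge freedom.
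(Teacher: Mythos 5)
Your route is genuinely different from the paper's. The paper works invariantly: it defines a partial connection $D_XY=K[X,K^{-1}Y]$ on $\cll$, shows it is well defined, torsion-free (using the vanishing of the Nijenhuis tensor $N_K$, which comes from Theorem \ref{integ}) and flat (Jacobi identity), then derives the Codazzi identity $D_Xg_L(Y,Z)=D_Zg_L(Y,X)$ from $\ud\omega=0$ and the parallelism of the volume form from $\ud F(\varphi_0)=0$. You instead work in adapted Darboux coordinates, take the Weinstein--Bott affine structure (affine coordinates $x^i$), reduce $\varphi_0$ to a normal form and solve $\ud\varphi_0=\ud F(\varphi_0)=0$ explicitly. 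Be aware that these are \emph{not the same affine structure}: the paper's $D$-parallel frame is $\{K\pt_{y^j}\}$ (Remark \ref{foliationcoor}), and the Bott structure you use is the \emph{dual} Hessian structure $\nabla^B=D'$, which the paper treats in the theorem immediately following and which it shows is in general distinct from $D$. Since Theorem \ref{structure} only asserts the existence of \emph{a} canonical affine structure with the stated properties, and since your computation does establish the Hessian property, the constancy of $\det[h_{ij}]$ and the parallel volume form for the Bott structure, your argument proves the literal statement; but it proves the "dual" half of the picture, and the paper's $D$ is still needed later. A side benefit of your approach is that it makes the Monge--Amp\`ere constant $1/(8\Delta^2)$ completely explicit; the paper's approach generalizes more readily beyond dimension six and does not require any normal form.

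The one genuine gap is the normal-form reduction, and your diagnosis of it slightly misses the real difficulty. The problem is not merely producing a \emph{smooth} $P$-valued gauge: a smoothly varying pointwise element of $P$ is a bundle automorphism, not a change of Darboux coordinates, and applying it to $\varphi_0$ destroys the closedness of $\varphi_0$ and of $F(\varphi_0)$ on which the rest of your computation depends. What saves you is that most of the reduction is already forced by pointwise linear algebra in \emph{any} adapted Darboux basis: since the fibre of $\{\varphi\in\clo_0^+:\ker K(\varphi)=T\cll\}$ over a fixed Lagrangian is the single orbit $P/\mathrm{Stab}(\varphi)$, and $P$ preserves the subspace $\bigwedge^{1}(T\cll)^{\circ\perp}$-graded piece $\bigwedge^{1,2}\oplus\bigwedge^{0,3}$, the components of bidegree $(3,0)$ and $(2,1)$ vanish automatically. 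What cannot in general be removed is a residual $(0,3)$ term $s(x,y)\,\ud y^1\wedge\ud y^2\wedge\ud y^3$: the only coordinate changes available are $\tilde x^i=x^i+\pt G/\pt y^i$ (plus $\Gl$-transformations in $y$), which shift $s$ by $-\tr(\tilde A\,\mathrm{Hess}_yG)$, and this cannot match an arbitrary $x$-dependence of $s$. So your displayed normal form is not achievable in general. The fix is cheap: carry the $(0,3)$ term along and observe that it contributes nothing to $K(\varphi_0)$, to $F(\varphi_0)$, to $g_\cll$, or to the $(2,2)$-bidegree component of $\ud\varphi_0=0$ (the only piece of closedness you use); it only enters the $(1,3)$-component, which you never need. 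With that modification, and with convexity of $h$ supplied by Proposition \ref{cases}, your argument closes.
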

\begin{proof}
The first part follows from Proposition \ref{Lag} directly. Now let us introduce a partial connection $D$ on $\cll$ as follows. 

For any vector fields $X$ and $Y$ that are tangent to $\cll$ everywhere, we define
\bea
D_XY=K[X,K^{-1}Y].\label{dualbott}
\eea
Suppose we have two choices $Z_1$ and $Z_2$ of $K^{-1}Y$, since $K(Z_1)=K(Z_2)=Y$, we know that $Z_1-Z_2$ is a section of $\ker K=\cll$. As $\cll$ is integrable and $X$ is tangent to $\cll$, it follows that $[X,Z_1-Z_2]$ is also tangent to $\cll$ hence $K[X,Z_1-Z_2]=0$. Therefore $D$ is well defined as $D_XY$ does not depend on the choice of $K^{-1}Y$. One can check that $D$ satisfies the usual properties of a connection. For any function $f$, we have
\[D_{fX}Y=K[fX,K^{-1}Y]=K(f[X,K^{-1}Y])-(K^{-1}Yf)KX=fD_XY,\]
and
\[\begin{split}
D_X(fY)&=K[X,K^{-1}(fY)]=K[X,fK^{-1}Y]=K(f[X,K^{-1}Y])+Xf\cdot K(K^{-1}Y)\\
&=fD_XY+Xf\cdot Y.\end{split}\]
Moreover, by vanishing of the Nijenhuis tensor (\ref{vanishing}), we have
\[\begin{split}D_XY-D_YX=&K([K(K^{-1}X),K^{-1}Y]+[K^{-1}X,K(K^{-1}Y)])=[K(K^{-1}X),K(K^{-1}Y)]\\
=&[X,Y].\end{split}\]
The Jacobi identity further suggests that
\[\begin{split}&D_X(D_YZ)-D_Y(D_XZ)-D_{[X,Y]}Z\\
=&K[X,[Y,K^{-1}Z]]-K[Y,[X,K^{-1}Z]]-K[[X,Y],K^{-1}Z]\\
=&0.\end{split}\]
Consequently the restriction of $D$ to each leaf $L$ is a torsion-free and flat connection on $TL$, hence we get a canonical affine structure on every leaf.

Let $g_L$ be the induced Riemannian metric from $g_\cll$. Then for any tangent vectors $Y$ and $Z$ of $L$, we have $g_L(Y,Z)=\omega(K^{-1}Y,Z)$,
hence
\[D_Xg_L(Y,Z)=X\omega(K^{-1}Y,Z)-\omega([X,K^{-1}Y],Z)-\omega(K^{-1}Y,D_XZ).\]
On the other hand, $\omega$ is a symplectic form, so
\[\begin{split}0=\ud\omega(X,K^{-1}Y,Z)=&X\omega(K^{-1}Y,Z)+K^{-1}Y\omega(Z,X)+Z\omega(X,K^{-1}Y)\\
&-\omega([X,K^{-1}Y],Z)-\omega([K^{-1}Y,Z],X)-\omega([Z,X],K^{-1}Y),\end{split}\]
which can be translated to
\[D_Xg_L(Y,Z)=D_Zg_L(Y,X),\]
hence $Dg_L$ is a totally symmetric tensor. In a local affine coordinate system $\{x^1,x^2,x^3\}$, we can write $g_L=h_{ij}\ud x^i\otimes\ud x^j$. It follows that 
\[\frac{\p}{\p x^k}h_{ij}=\frac{\p}{\p x^j}h_{ik}\]
for any indices $i,j$ and $k$. Therefore locally we can find a potential function $h$ such that $h_{ij}=\dfrac{\p^2h}{\p x^i\p x^j}$ hence $g_L$ is a Hessian metric. Let $\vol_{g_L}$ be the volume form of the metric $g_L$. By a pointwise computation using the normal form, we have
\[\vol_{g_L}(X,Y,Z)=\frac{1}{\sqrt{2}}F(\varphi)(K^{-1}X,K^{-1}Y,K^{-1}Z).\]
Since $\ud F(\varphi)=0$ and $\cll=\ker F(\varphi)$, we get
\[\begin{split}&WF(\varphi)(K^{-1}X,K^{-1}Y,K^{-1}Z)-F(\varphi)([W,K^{-1}X],K^{-1}Y,K^{-1}Z)\\
&-F(\varphi)(K^{-1}X,[W,K^{-1}Y],K^{-1}Z)-F(\varphi)(K^{-1}X,K^{-1}Y,[W,K^{-1}Z])=0\end{split}\]
for any $X,Y,Z,W$ tangent to $\cll$. This equation translates to
\[D_W\vol_{g_L}(X,Y,Z)=0.\]
Therefore we know that $\vol_{g_L}$ is parallel under $D$.
\end{proof}

\begin{rmk}\label{foliationcoor}
It is not hard to write down a locally $D$-parallel frame of $TL$. By a theorem of Weinstein \cite{weinstein1971}, near any point $p$ of $M^\circ$, there exists locally a Darboux coordinate system $\{x^1,y^1,x^2,y^2,x^3,y^3\}$ such that
\[\omega=\ud x^1\wedge\ud y^1+\ud x^2\wedge\ud y^2+\ud x^3\wedge\ud y^3,\]
and the leaves of $\cll$ are locally defined by $y=(y^1,y^2,y^3)$ are constants. In this local coordinates, $\left\{K\dfrac{\p }{\p y^j}\right\}_{j=1}^3$ gives rise to a $D$-parallel frame on $TL$.
\end{rmk}

On each leaf $L$, the pair $(g_L,D)$ forms what is known as a Hessian structure \cite[Chap. 2]{shima2007}. By performing the Legendre transform locally, one can generate a dual Hessian structure $(g_L,D')$ on $L$, where $D'$ is a torsion-free and flat connection on $TL$ such that
\[D+D'=2\nabla,\]
where $\nabla$ is the Levi-Civita connection associated to $g_L$. The same metric $g_L$ is also Hessian with respect to $D'$ and the volume form $\vol_{g_L}$ is also parallel under $D'$. On the other hand, for any Lagrangian foliation, there is another canonical affine structure on its leaves characterized by the Bott (partial) connection $\nabla^B$ \cite{bott1972}. In this setting, see for example \cite[Chap. 4]{hamilton2023}, the Bott (partial) connection satisfies
\bea
\omega(\nabla^B_XY,Z)=X\omega(Y,Z)+\omega([X,Z],Y)\label{bott}
\eea
for any $X,Y$ tangent to $\cll$ and arbitrary vector field $Z$. It is not surprising to expect the following theorem:
\begin{thm}
The Bott connection $\nabla^B$ is dual to $D$ in the sense of Hessian structures, namely $\nabla^B=D'$. In particular, with respect to the affine structure determined by $\nabla^B$, the metric $g_L$ on each leaf $L$ is Hessian and its local potential functions all solve a real Monge-Amp\`ere equation with constant right hand side.
\end{thm}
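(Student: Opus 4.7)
The plan is to reduce the theorem to a single direct identity, namely the pointwise verification of
\[
X\,g_L(Y,Z)\;=\;g_L(D_X Y,\,Z)\;+\;g_L(Y,\,\nabla^B_X Z)
\]
for all vector fields $X,Y,Z$ tangent to a leaf $L$. This is the defining relation of the dual connection $D'$ with respect to $(g_L,D)$, and since $D'$ is uniquely characterized by it, the identity will force $\nabla^B=D'$. Once this is in hand, the remaining assertions — that $(g_L,\nabla^B)$ is itself a Hessian structure whose local potential solves a real Monge-Amp\`ere equation with constant right hand side — follow immediately from the standard Hessian duality formalism (cf.\ \cite[Chap.~2]{shima2007}): the Legendre transform produces a dual potential $h'$ in $\nabla^B$-affine coordinates whose Hessian matrix is the inverse of that of the $D$-potential $h$, so the constancy of $\det(\partial^2h/\partial x^i\partial x^j)$ supplied by Theorem \ref{structure} translates into constancy of $\det(\partial^2 h'/\partial u^i\partial u^j)$.

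Before computing, I would first record the sanity check that $\nabla^B_X Z\in\cll$ whenever $X,Z\in\cll$. This rests on the Lagrangian identity $\cll^{\perp_\omega}=\cll$ combined with the involutivity of $\cll$: for any $W\in\cll$ the right-hand side of (\ref{bott}) collapses to $X\omega(Z,W)+\omega([X,W],Z)=0$, because $\omega$ vanishes on pairs in $\cll$ and $[X,W]\in\cll$. Hence $\omega(\nabla^B_X Z,\,\cdot)$ annihilates $\cll$, forcing $\nabla^B_X Z$ itself to lie in $\cll$.

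The core computation proceeds as follows. I use the identification $g_L(W_1,W_2)=\omega(K^{-1}W_1,W_2)$ for $W_1,W_2$ tangent to $\cll$. Plugging (\ref{dualbott}) into the first term on the right of the duality identity yields $g_L(D_X Y,Z)=\omega([X,K^{-1}Y],Z)$; the ambiguity inherent in $K^{-1}$ modulo $\cll$-valued sections is harmless, because $Z\in\cll$ and $\cll$ is Lagrangian. For the second term I would feed the probe vector $W=K^{-1}Y$ into (\ref{bott}), producing
\[
g_L(Y,\nabla^B_X Z)\;=\;\omega(K^{-1}Y,\nabla^B_X Z)\;=\;-\,X\omega(Z,K^{-1}Y)\;-\;\omega([X,K^{-1}Y],Z).
\]
Adding the two expressions, the $\omega([X,K^{-1}Y],Z)$ contributions cancel and antisymmetry of $\omega$ reassembles the remainder into $X\omega(K^{-1}Y,Z)=X\,g_L(Y,Z)$, completing the verification.

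No substantial obstacle is anticipated: the argument is essentially algebraic, matching two connections through the duality formula, with all of the genuinely nontrivial geometric content — the vanishing of the Nijenhuis tensor $N_K$, the closedness of $\omega$ and of $F(\varphi_0)$, and the Lagrangian character of $\cll$ — already absorbed into the definitions and into Theorem \ref{structure}. The only point requiring a moment's care is the $K^{-1}$ bookkeeping; but as noted above the ambiguity is valued in $\cll$ and is annihilated by $\omega$ upon pairing with another element of $\cll$, so it never affects the end result.
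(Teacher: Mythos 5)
Your proposal is correct and follows exactly the paper's route: the paper likewise reduces the theorem to verifying the duality identity $Xg_L(Y,Z)=g_L(D_XY,Z)+g_L(Y,\nabla^B_XZ)$ from the formulas $g_L(\cdot,\cdot)=\omega(K^{-1}\cdot,\cdot)$, (\ref{dualbott}) and (\ref{bott}), merely asserting the computation that you carry out explicitly. Your added checks (that $\nabla^B$ preserves $\cll$ and that the $K^{-1}$ ambiguity is $\omega$-annihilated against $\cll$) are correct and only make the argument more complete.
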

\begin{proof}
We only need to prove that
\[Xg_L(Y,Z)=g_L(D_XY,Z)+g_L(Y,\nabla^B_XZ)\]
for any vector fields $X,Y,Z$ tangent to $\cll$. This follows directly from that $g_L(\cdot,\cdot)=\omega(K^{-1}\cdot,\cdot)$ and the definitions of $D$ (\ref{dualbott}) and $\nabla^B$ (\ref{bott}).
\end{proof}
\begin{thm}\label{sf}
The natural metric $g_L$ on any leaf $L$ has nonnegative Ricci curvature. If the leaf $L$ is compact, then it must be a flat torus.
\end{thm}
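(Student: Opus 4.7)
For the first assertion, I would compute the Ricci tensor of $g_L$ directly in the local $D$-affine coordinates provided by Remark~\ref{foliationcoor}. In these coordinates one has $g_{ij} = \partial^2 h/\partial x^i\partial x^j$ for a local convex potential $h$, and the fact that the volume form of $g_L$ is $D$-parallel (Theorem~\ref{structure}) translates to the real Monge-Amp\`ere equation $\det(\partial^2 h/\partial x^i \partial x^j) = \mathrm{const}$. Introducing the totally symmetric cubic tensor $C_{ijk} := \partial_i\partial_j\partial_k h = \partial_i g_{jk}$, a first differentiation of the Monge-Amp\`ere equation gives the trace-free constraint $g^{ij}C_{ijk}=0$. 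The Christoffel symbols reduce to $\Gamma^k_{ij}=\tfrac12 g^{kl}C_{lij}$, and the constraint makes them trace-free: $\Gamma^i_{ij}=0$. Differentiating the constraint once more to eliminate the fourth-order derivatives of $h$ appearing in $\partial_k\Gamma^k_{jl}$, a short manipulation reduces the Ricci tensor to the manifestly Gram-type expression
$$R_{jk} \;=\; \tfrac{1}{4}\, g^{ac}g^{bd}\, C_{abj}\,C_{cdk}.$$
For any $v$ one has $R_{jk}v^jv^k = \tfrac14 |B|^2_g$ with $B_{ab}:=C_{abm}v^m$, which is nonnegative; as a by-product $\mathrm{Scal}(g_L) = \tfrac14 |C|^2_g \geq 0$ with equality at a point exactly when $C$ vanishes there.

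For the rigidity statement, I would pass to the Riemannian universal cover $(\tilde L, \tilde g, \tilde D)$, which inherits the Hessian structure together with parallel volume form. Since $\tilde L$ is simply connected, the local Hessian potentials glue to a globally defined strictly convex $h\colon \tilde L \to \mathbb{R}$ with $\tilde D^2 h = \tilde g$: the obstruction to gluing lies in $H^1$ of the sheaf of $\tilde D$-affine functions, which sits in a short exact sequence between $\mathbb{R}$ and the locally constant sheaf of $\tilde D$-parallel $1$-forms, both with vanishing $H^1$ over the simply connected base. Compactness of $L$ forces $\tilde g$ to be geodesically complete.

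At this stage I would invoke the classical Calabi-J\"orgens-Pogorelov rigidity theorem (in its Riemannian form due to Cheng-Yau): a globally defined strictly convex solution of $\det(\partial^2 h)=\mathrm{const}$ on a simply connected complete flat affine manifold, identified via the developing map with its image in $\mathbb{R}^3$, must in fact be a quadratic polynomial defined on all of $\mathbb{R}^3$. Consequently $\tilde g$ is the flat Euclidean metric and $\tilde L \cong \mathbb{R}^3$ both affinely and isometrically, so $L$ is a compact flat Riemannian $3$-manifold; combining with the canonical orientation on $L$ (Proposition~\ref{Lag}) and Bieberbach's theorem identifies $L$ with a flat $3$-torus. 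The main obstacle in this approach is the invocation of the Calabi-J\"orgens-Pogorelov-Cheng-Yau rigidity theorem, which is a non-trivial external input. A more self-contained alternative would combine the nonnegativity of $\mathrm{Ric}(g_L)$ from part (a) with the Cheeger-Gromoll splitting to write $\tilde L$ as a Riemannian product $\mathbb{R}^k \times N$, and then attempt to rule out any nontrivial simply connected compact factor $N$ by a developing-map argument, since a closed simply connected manifold admits no flat affine structure; the subtle point there is verifying that the de~Rham splitting of $\tilde g$ is actually compatible with the flat affine connection $\tilde D$ so that $N$ inherits a flat affine structure of its own.
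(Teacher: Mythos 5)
Your treatment of the first assertion coincides with the paper's. In $D$-affine coordinates the paper writes the Ricci tensor of a Hessian metric as $R_{jk}=\frac{1}{4}h^{st}h^{lp}(h_{jps}h_{klt}-h_{jkt}h_{pls})$, uses the parallel volume form to get $\partial_s\log\det[h_{pl}]=h^{lp}h_{pls}=0$, and lands on exactly your Gram-type expression $R_{jk}=\frac{1}{4}h^{st}h^{lp}h_{jps}h_{klt}$, i.e.\ $\mathrm{Ric}(X,X)=\frac{1}{4}|D_Xg|^2\geq 0$ and $S=\frac{1}{4}|Dg|^2\geq 0$. No issues there.

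For the rigidity statement your route is genuinely different and, as written, has gaps that the paper's route sidesteps entirely. The paper's argument is two lines: a compact leaf of a Lagrangian foliation is (quoted as well known to be) a torus, and by Schoen--Yau and Gromov--Lawson any metric of nonnegative scalar curvature on a torus is flat --- so the inequality $S=\frac{1}{4}|Dg|^2\geq 0$ from the first part finishes the proof. You instead try to extract both the flatness and the torus topology from affine rigidity on the universal cover. The potential-gluing and completeness steps are fine, but the appeal to Calabi--J\"orgens--Pogorelov is not: that theorem applies to a convex function on a domain in $\mathbb{R}^3$, and to transport your global potential $h$ on $\tilde L$ to such a function you need the developing map of $(\tilde L,\tilde D)$ to be injective with convex (indeed, all of $\mathbb{R}^3$ as) image. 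Completeness of the Hessian metric $\tilde g$ does not give completeness or injectivity of the affine developing map for free; establishing this implication is precisely the hard content of Cheng--Yau's work on the real Monge--Amp\`ere equation and affine flat structures (and of the Baues--Cort\'es-type rigidity of compact Monge--Amp\`ere manifolds), so you are not so much invoking a black box as re-deriving the theorem you want from a deeper one whose key step you have not supplied. Your Cheeger--Gromoll alternative stalls, as you note, at the compatibility of the de Rham splitting with $\tilde D$. The lesson of the paper's proof is that none of this machinery is needed: once you accept that a compact leaf is a torus, the nonnegative scalar curvature you already computed is exactly the hypothesis that Schoen--Yau/Gromov--Lawson converts into flatness.
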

\begin{proof}
In an affine coordinate, the Ricci curvature $R_{jk}$ of a Hessian metric $g$ is given by
\bea
R_{jk}=\frac{1}{4}h^{st}h^{lp}(h_{jps}h_{klt}-h_{jkt}h_{pls}),\label{riccigen}
\eea
where $h$ is the local potential function for the Hessian metric $g$ and $h_{ijk}$ is the short-handed notation for $\dfrac{\p^3h}{\p x^i\p x^j\p x^k}$, the 3rd derivative of $h$ with respect to an affine coordinate system. If in addition that the volume form is parallel under the affine connection, then $\det[h_{pl}]$ is a constant, hence
\[\frac{\p}{\p x^s}\log\det[h_{pl}]=h^{lp}h_{pls}=0.\]
Plugging it in (\ref{riccigen}), we get
\bea
R_{jk}=\frac{1}{4}h^{st}h^{lp}h_{jps}h_{klt}
\eea
is semi-positive definite. In other words, for any vector field $X$, we have
\bea
\textrm{Ric}(X,X)=\frac{1}{4}|D_Xg|^2\geq0.\label{ricci}
\eea
It follows that the scalar curvature $S$ has the expression
\bea
S=\frac{1}{4}h^{st}h^{ik}h^{jl}h_{sil}h_{tkj}=\frac{1}{4}|Dg|^2\geq 0.\label{scalar}
\eea
As in Theorem (\ref{structure}) we have proved that for any leaf $L$, it has a Hessian structure with parallel volume form, so we know its Ricci curvature is nonegative from (\ref{ricci}).

Now suppose that $L$ is a compact leaf. It is well-known that $L$ must be a torus. By a famous theorem of Schoen-Yau \cite{schoen1979} and Gromov-Lawson \cite{gromov1980}, we know that any Riemannian metric on a torus with nonnegative scalar curvature must be flat.
\end{proof}
\begin{rmk}
In general, the two affine structures determined by $D$ and $\nabla^B$ are different. This follows from the fact that the metric $g_L$ on each leaf is in general not flat. See the example below. However, when $g_L$ is a flat metric, the two connections $D$ and $\nabla^B$ both coincide with the Levi-Civita connection of $g_L$.
\end{rmk}
\begin{ex}
Let $(N,g)$ be an oriented 3-dimensional Riemannian manifold. Over $N$, we consider its cotangent bundle $T^*N$, namely the bundle of 1-forms, and also the bundle of 2-forms $\bigwedge^2T^*N$. It is well-known that $T^*N$ carries a tautological 1-form, whose exterior derivative gives rise to the canonical symplectic structure $\omega$ on $T^*N$. Let $\{x^1,x^2,x^3\}$ be a local coordinate chart on $N$, and let $\{y_1,y_2,y_3\}$ be the corresponding coordinate in the cotangent direction, we know that $\omega$ takes the Darboux form
\[\omega=\ud x^1\wedge\ud y_1+\ud x^2\wedge\ud y_2+\ud x^3\wedge\ud y_3.\]
Similarly, the total space of $\bigwedge^2T^*N$ carries a tautological 2-form $\alpha$ defined as follows: Let $(p,\lambda)$ be a point in $\bigwedge^2T^*N$ where $p\in N$ and $\lambda\in\bigwedge^2T^*_pN$. For any $X,Y\in T_{(p,\lambda)}\bigwedge^2T^*N$, we define that
\[\alpha_{(p,\lambda)}(X,Y)=\lambda(\pi_*X,\pi_*Y),\]
where $\pi:\bigwedge^2T^*N\to N$ is the projection map. Over a local coordinate chart $\{x^1,x^2,x^3\}$ of $N$, any 2-form on $N$ can be written as $t^1\ud x^2\wedge\ud x^3+t^2\ud x^3\wedge\ud x^1+t^3\ud x^1\wedge\ud x^2$. If we use $\{x^1,t^1,x^2,t^2,x^3,t^3\}$ as a local coordinate chart on $\bigwedge^2T^*N$, we can express $\alpha$ as
\[\alpha=t^1\ud x^2\wedge\ud x^3+t^2\ud x^3\wedge\ud x^1+t^3\ud x^1\wedge\ud x^2.\]
By taking the exterior derivative, we get the canonical 3-form $\ud\alpha$ on $\bigwedge^2T^*N$:
\[\begin{split}\ud\alpha&=\ud(t^1\ud x^2\wedge\ud x^3+t^2\ud x^3\wedge\ud x^1+t^3\ud x^1\wedge\ud x^2)\\
&=\ud t^1\wedge\ud x^2\wedge\ud x^3+\ud t^2\wedge\ud x^3\wedge\ud x^1+\ud t^3\wedge\ud x^1\wedge\ud x^2.\end{split}\]

The Riemannian metric $g$ on $N$ provides us an isomorphism $T^*N\cong\bigwedge^2T^*N$ via the Hodge star operator. In the local coordinates above, we have
\[t^j=y_kg^{kj}\sqrt{\det g},\]
or
\[y_k=\frac{1}{\sqrt{\det g}}g_{kj}t^j.\]
Under this identification, we can think of $\omega$ as a symplectic form on $\bigwedge^2T^*N$ with the expression
\[\omega=\frac{g_{kj}}{\sqrt{\det g}}\ud x^k\wedge\ud t^j+t^j\ud x^k\wedge\ud\left(\frac{g_{kj}}{\sqrt{\det g}}\right).\]
It is not hard to find out that
\[\frac{\omega^3}{3!}=\frac{1}{\sqrt{\det g}}\ud x^1\wedge\ud t^1\wedge\ud x^2\wedge\ud t^2\wedge\ud x^3\wedge\ud t^3.\]
Let $r$ be the norm square function on $\bigwedge^2T^*N$, namely $r(p,\lambda)=|\lambda|^2_{g_p}$. In local coordinates we have that
\[r(x,t)=\frac{t^jt^k}{\det g}g_{jk}.\]
A straightforward calculation shows that 
\bea
\alpha\wedge\omega=\frac{\sqrt{\det g}}{2}\ud x^1\wedge\ud x^2\wedge\ud x^3\wedge\ud r.\label{basic}
\eea
It follows that $\ud\alpha\wedge\omega=\ud(\alpha\wedge\omega)=0$, hence $\ud\alpha$ is primitive.

Let $f$ be a smooth single variable function of $r$, we can define
\[\varphi_f=\ud(f\alpha)=f\ud\alpha+f'\ud r\wedge\alpha.\]
It follows from (\ref{basic}) that $\varphi_f$ is a primitive 3-form for any such $f$. Moreover, direct calculation reveals that
\[\begin{split}&K(\varphi_f)\left(\frac{\p}{\p t^j}\right)=0,\\
&K(\varphi_f)\left(\frac{\p}{\p x^j}\right)=2f\sqrt{\det g}\left[(f+2rf')\frac{\p}{\p t^j}-f'\frac{\p r}{\p t^j}\cle\right],\end{split}\]
where $\cle=t^j\dfrac{\p}{\p t^j}$ is the local Euler vector field. It follows that 
\[F(\varphi_f)=-4f^2(f+2rf')\sqrt{\det g}\cdot\ud x^1\wedge\ud x^2\wedge\ud x^3.\]
We are mainly interested in the case that 
\bea
f(r)=r^{-\frac{1}{2}}(r^{\frac{3}{2}}+C)^{\frac{1}{3}}\label{fchoice}
\eea
for some constant $C$, so that
\[f^2(f+2rf')=1.\]
In this case, we have
\[F(\varphi_f)=-4\sqrt{\det g}\cdot\ud x^1\wedge\ud x^2\wedge\ud x^3\]
is a closed 3-form, hence $\varphi_f$ is $F$-harmonic.

With the above choice (\ref{fchoice}) of $f$, we consider $M=\bigwedge^2T^*N$ when $C=0$ or $M=\bigwedge^2T^*N\setminus N$ when $C>0$ and $M=\bigwedge^2T^*N
\setminus\{r\leq(-C)^{2/3}\}$ when $C<0$. It follows that $(M,\omega,\varphi_f)$ is a triple such that $(M,\omega)$ is a symplectic 6-manifold and $\varphi_f$ is a primitive 3-form on $M$ belonging to the orbit $\clo_0^+$ pointwise. In addition, $\varphi_f$ is $F$-harmonic.

It is obvious that the Lagrangian foliation $\cll$ in this case is simply given by the vertical foliation of the fibration $\pi:\bigwedge^2T^*N\to N$. On one hand, the affine structure associated to the Bott connection $\nabla^B$ is the standard affine structure on the fibers of the vector bundle $\pi:\bigwedge^2T^*N\to N$. To understand the affine structure associated to $D$, we let 
\[V_j=K(\varphi_f)\left(\frac{\p}{\p x^j}\right)=2f\sqrt{\det g}\left[(f+2rf')\frac{\p}{\p t^j}-f'\frac{\p r}{\p t^j}\cle\right],\]
for $j=1,2,3$. By Remark \ref{foliationcoor} we know that the restriction of $V_j$ to any leaf is parallel under $D$, hence gives rise to a local affine coordinate on the Lagrangian leaf. In this local affine coordinate, the induced metric $g_{\cll}$ takes the Hessian form
\[h_{jk}=2f^{-1}g_{jk}-ff'\det g\frac{\p r}{\p t^j}\frac{\p r}{\p t^k}.\]
By direct computation we have
\[\det [h_{jk}]=8\det g\]
is constant in $t$, as predicted. In addition, we have
\[h^{jk}=\frac{f}{2}\left[g^{jk}+\frac{f'\det g}{2f}g^{jp}\frac{\p r}{\p t^p}\frac{\p r}{\p t^q}g^{qk}\right],\]
and
\[h_{jkl}=-4f'\sqrt{\det g}\left(g_{jk}\frac{\p r}{\p t^l}+g_{kl}\frac{\p r}{\p t^j}+g_{lj}\frac{\p r}{\p t^k}\right)-\frac{C(5r^{\frac{3}{2}}+2C)}{2r^4(r^{\frac{3}{2}}+C)^{\frac{2}{3}}}(\det g)^{\frac{3}{2}}\frac{\p r}{\p t^j}\frac{\p r}{\p t^k}\frac{\p r}{\p t^l}.\]
To demonstrate that $D$ and $\nabla^B$ induces different affine structures on the leaves, we only need to show that the induced metric $g_L$ on the leaves are not flat, as $D$ and $\nabla^B$ define dual Hessian structures on every leaf. This can be done by computing the scalar curvature of every leaf $L$. By (\ref{scalar}) we know that
\[S=\frac{1}{4}|Dg|^2>0\]
because $h_{jkl}\neq 0$.

Alternatively, we notice that in this example the induced metric on the leaves are rotationally symmetric. If we let $r=\rho^2$, then this metric can be written as
\[g_L=\frac{\rho^2}{2(\rho^3+C)^{2/3}}(\ud\rho)^2+\frac{\rho(\rho^3+C)^{1/3}}{2}g_{S^2}.\]
It follows that the scalar curvature of $g_L$ is
\[S=\frac{5C^2}{\rho^4(\rho^3+C)^{4/3}}.\]
\end{ex}

\begin{rmk}
Much of the discussion in this section can be placed in a more general setting such as in higher dimensions or with weaker assumption on integrability conditions. We shall not pursue this kind of generality in this paper. The general theory will be developed elsewhere.
\end{rmk}

Now let us focus on the case where we have a Lagrangian torus fibration. Namely, we shall assume that $(M^\circ,\omega)$ is a symplectic 6-manifold, with $\varphi_0$ an $F$-harmonic $3$-form on $(M^\circ,\omega)$ belonging to the orbit $\clo_0^+$ pointwise so that the associated Lagrangian foliation $\cll$ comes from a Lagrangian torus fibration $\pi:M^\circ\to B$. 

For any point $y\in B$, we shall denote the Lagrangian torus fiber over $y$ by $L_y:=\pi^{-1}(y)$. Inspired by Hitchin \cite{hitchin1997}, we introduce the linear map $l_y:T_yB\to\Omega^1L_y$, mapping a tangent vector in $T_yB$ to a 1-form on $L_y$ as follows. For any $V\in T_yB$ and $p\in L_y$, we define $l_y(V)$ by
\[\left(l_y(V)\right)_p=\iota_{\tilde{V}_p}\omega\bigg|_{T_pL_y},\]
where $\tilde V_p\in T_pM$ is a lifting of $V\in T_yB$ so that $\iota_{\tilde{V}_p}\omega\in T_p^*M$ as a linear functional on $T_pM$ can be restricted to a subspace $T_pL_y$. It is easy to see that this definition is independent of the choice of the lifting as $L_y$ is Lagrangian.

In our case, it is very simple to write down the local expression for $l_y(V)$. As mentioned before, by a theorem of Weinstein \cite{weinstein1971}, we can choose a suitable coordinate chart $\{x^1,y^1,x^2,y^2,x^3,y^3\}$ near $p$ so that
\[\omega=\ud x^1\wedge\ud y^1+\ud x^2\wedge\ud y^2+\ud x^3\wedge\ud y^3\]
and the fiber(leaf) $L_y$ is locally defined by $y=\{y^1,y^2,y^3\}=\textrm{const}$. In other words, the fibration $\pi:M\to B$ is locally given by $(x^1,y^1,x^2,y^2,x^3,y^3)\mapsto(y^1,y^2,y^3)$. Moreover, one can check that $\{x^1,x^2,x^3\}$ is exactly the affine coordinates on $L_y$ with respect to the Bott connection $\nabla^B$. Using such a coordinate chart, we have
\[l_y\left(\frac{\pt}{\pt y^j}\right)=-\ud x^j.\]
For Lagrangian torus fibrations, we know from Theorem \ref{sf} that the induced metrics on $L_y$ are all flat, so the 1-form $-\ud x^j$ is harmonic. Therefore we have proved
\begin{prop}
The linear map $l_y$ induces an isomorphism
\[l_y:T_yB\cong\clh^1(L_y)\]
for every $y\in B$, where $\clh^1(L_y)$ denotes the space of harmonic 1-forms on $L_y$ with respect to the naturally induced metric.
\end{prop}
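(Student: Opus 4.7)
The plan is to show three things: (i) for each $V\in T_yB$, the closed $1$-form $l_y(V)$ on $L_y$ is in fact harmonic with respect to the flat metric from Theorem \ref{sf}, so that $l_y$ lands in $\mathcal{H}^1(L_y)$; (ii) $l_y$ is injective; (iii) both vector spaces have dimension $3$, so injectivity forces the map to be an isomorphism. The main conceptual work is in (i); steps (ii) and (iii) are routine.

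For (i), I would first invoke the Weinstein coordinates described in Remark \ref{foliationcoor}. Around any chosen point $p\in L_y$ we have a Darboux chart $\{x^1,y^1,x^2,y^2,x^3,y^3\}$ in which $\pi$ is projection onto $\{y^j\}$ and, as computed just before the proposition, $l_y(\partial/\partial y^j)=-\ud x^j$. The coordinates $\{x^j\}$ are Bott-affine on $L_y$, i.e. the frame $\{\partial/\partial x^j\}$ is $\nabla^B$-parallel. Hence $\ud x^j$ is $\nabla^B$-parallel on this chart. Since $l_y(V)$ is defined globally and agrees in each such chart with a $\nabla^B$-parallel form, and since parallelism is a local condition, $l_y(V)$ is a globally $\nabla^B$-parallel $1$-form on $L_y$.

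Now I invoke Theorem \ref{sf}: every compact leaf of $\cll$ is a flat Riemannian torus. The remark immediately following that theorem shows that when the induced metric $g_{L_y}$ is flat, both $D$ and $\nabla^B$ coincide with the Levi-Civita connection of $g_{L_y}$. Therefore $l_y(V)$ is parallel with respect to the Levi-Civita connection of the flat metric on $L_y$; on a compact Riemannian manifold, any parallel form is both closed and co-closed, hence harmonic. This places $l_y(V)\in\mathcal{H}^1(L_y)$.

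For injectivity, suppose $l_y(V)\equiv 0$ on $L_y$. Pick any point $p\in L_y$ with Weinstein coordinates as above and write $V=V^j\,\partial/\partial y^j$; then $l_y(V)_p=-V^j\,\ud x^j=0$ forces $V^j=0$, so $V=0$. Finally, $\dim T_yB=3$ by construction, and $\dim\mathcal{H}^1(L_y)=b_1(L_y)=3$ because $L_y$ is a $3$-torus. An injective linear map between equidimensional vector spaces is an isomorphism, completing the argument. The only step requiring any real input beyond the local model is the passage from $\nabla^B$-parallel to harmonic, which is precisely what Theorem \ref{sf} and its subsequent remark make possible.
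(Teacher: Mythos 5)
Your proposal is correct and follows essentially the same route as the paper: Weinstein coordinates give $l_y(\partial/\partial y^j)=-\ud x^j$ with $\{x^j\}$ Bott-affine, and Theorem \ref{sf} supplies flatness of the leaf so that these forms are parallel, hence harmonic. The paper leaves the injectivity and dimension count implicit, which you spell out, but the substance of the argument is identical.
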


This proposition allows us to define a Riemannian metric $g_B$ on $B$ as follows: For any tangent vector $X\in T_yB$, we define
\[g_B(X,X)=\int_{L_y}|l_y(X)|^2\vol_{L_y},\]
namely that $g_B$ is induced from the $L^2$-metric on the space of harmonic 1-forms on $L_y$.

Since the torus fibration is locally trivial, over a small open set in $B$, we can fix a basis $\{A_1,A_2,A_3\}$ of $H_1(L,\zz)$, where $L$ is a typical torus fiber. Let 
\[\lambda_{ij}=-\int_{A_i}\ud x^j\]
be the period matrix. As shown in \cite{hitchin1997} we know that the matrix $(\lambda_{ij})$ is invertible and the 1-forms $\xi_i:=\sum_j\lambda_{ij}\ud y^j$ are all closed on $B$ for $i=1,2,3$. It is easy to check that the definition of $\xi_i$ does not depend on the choice of coordinate on $B$, and it differs from itself by a linear transformation if we choose a different basis of $H_1(L,\zz)$.

Since $\xi_i$ are closed, locally on $B$ we can find a function $u_i$ such that $\ud u_i=\xi_i$. And by the above analysis we know that the ambiguity of $u_i$ is up to an affine transformation, therefore we obtain an affine structure on $B$ with affine coordinates $\{u_1,u_2,u_3\}$. Let $\{\alpha^1,\alpha^2,\alpha^3\}$ be the dual basis of $\{A_1,A_2,A_3\}$ in $H^1(L,\zz)$. It follows that the locally defined function $u=u_1\alpha^1+u_2\alpha^2+u_3\alpha^3:B\to H^1(L,\rr)$ does not depend on the choice of $\{A_1,A_2,A_3\}$ and is well-defined up to an additive constant.

\begin{thm}\label{affine}
The $L^2$-metric $g_B$ on $B$ is a Hessian metric with respect to the affine coordinates $\{u_1,u_2,u_3\}$.
\end{thm}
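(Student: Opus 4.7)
My plan is to work in Arnold--Liouville action-angle coordinates $(u_1, u_2, u_3, \eta_1, \eta_2, \eta_3)$ adapted to the Lagrangian torus fibration, in which $\omega = \sum_i du_i \wedge d\eta_i$, the $\eta_i \in \mathbb{R}/\mathbb{Z}$ are fiber angle coordinates, and the $u_i$ coincide (up to additive constants) with the globally defined affine coordinates of the theorem, since both are characterized by the period lattice $\lambda_{ij}$. By Theorem \ref{sf} each fiber $L_y$ is a flat torus, and since the Bott connection coincides with the Levi-Civita connection on a flat leaf, the fiber metric takes the form $g_L = H_{ij}(u)\, d\eta_i \otimes d\eta_j$ with $H_{ij}$ depending only on $u$.

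In these coordinates, $l_y(\partial/\partial u_i) = \iota_{\partial/\partial u_i}\omega|_{L_y} = d\eta_i|_{L_y}$ is already harmonic on the flat torus, and since the fiber has $\eta$-volume $1$ a direct fiber integration gives
\[
(g_B)^{(u)}_{ij}(u) \;=\; \int_{L_y} g_L^*(d\eta_i, d\eta_j)\,\vol_{L_y} \;=\; H^{ij}(u)\sqrt{\det H(u)}.
\]
The theorem thus reduces to showing the Codazzi symmetry of $\partial_k\bigl[H^{ij}\sqrt{\det H}\bigr]$ in $(i,j,k)$, equivalently the existence of a convex potential $K(u)$ with $(g_B)^{(u)}_{ij} = \partial^2 K/\partial u_i \partial u_j$.

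To obtain this symmetry I would next write $\varphi_0$ itself in the $(u,\eta)$-coordinates. The orbit condition $\varphi_0 \in \clo_0^+$ with $\ker K(\varphi_0) = \mathcal{L}$ (the fiber), together with primitivity, restricts $\varphi_0$ to the form $\varphi_0 = \tilde A^{ip}(u)\, d\eta_i \wedge \star du_p + \gamma(u)\, du_1 \wedge du_2 \wedge du_3$ with $\tilde A^{ip}$ symmetric; matching the induced fiber metric $g_L$ forces $\tilde A^{ip}$ to be proportional to $H_{ip}/\sqrt{\det H}$. The closedness $d\varphi_0 = 0$ then yields $\sum_p \partial_{u_p}\bigl(H_{ip}/\sqrt{\det H}\bigr) = 0$, while $dF(\varphi_0) = 0$ is automatic since $H$ is $\eta$-independent.

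The hardest step, which I expect to be the main obstacle, is passing from this divergence-type identity (three scalar equations) to the full Codazzi symmetry of $\partial_k[H^{ij}\sqrt{\det H}]$. My plan is to combine the divergence identity with the algebraic identities relating $K$, $F$ and $Q$ for $\varphi_0 \in \clo_0^+$ and to exploit the pointwise orbit condition (which hides additional constraints once $H$ is $\eta$-independent), and, when writing the proof out in detail, to re-express $(g_B)^{(u)}_{ij}\,du_j$ as a fiber integral of a specific natural form whose closedness follows from that of $\varphi_0$ and $F(\varphi_0)$, so that the symmetry collapses to Stokes' theorem on the torus. Once symmetry is established the Hessian potential $K(u)$ is recovered via Poincar\'e's lemma, and convexity follows from the positive-definiteness of $g_B$ ensured by Proposition \ref{cases}.
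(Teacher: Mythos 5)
Your reduction is sound as far as it goes: in action--angle coordinates the fibre metric has constant coefficients $H_{ij}(u)$ on each leaf (by Theorem \ref{sf}, together with the coincidence of $D$, $\nabla^B$ and the Levi-Civita connection on a flat leaf), the $L^2$-metric is $(g_B)_{ij}=H^{ij}\sqrt{\det H}$, and the theorem is exactly the total symmetry of $\partial_{u_k}\bigl[H^{ij}\sqrt{\det H}\bigr]$. But the argument stops precisely at the step you yourself flag as the main obstacle, and the gap is genuine. By your own computation, $\ud\varphi_0=0$ yields only the three divergence identities $\sum_p\partial_{u_p}\tilde A^{ip}=0$, and $\ud F(\varphi_0)=0$ is vacuous once the coefficients are $\eta$-independent; three scalar equations cannot imply the eight independent conditions that total symmetry of a tensor already symmetric in $(i,j)$ amounts to (and a generic divergence-free symmetric $2$-tensor field is not Codazzi). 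The hope that the pointwise orbit condition $\varphi_0\in\clo_0^+$ hides further constraints will not rescue this: it is an open algebraic condition on the value of $\varphi_0$ at each point and imposes no differential relations among the $H_{ij}(u)$. Your final suggestion --- realizing $(g_B)_{ij}\,\ud u_j$ as the fibre integral of a natural closed form --- points in the right direction, but note that every term of $\varphi_0$ in adapted coordinates carries exactly one fibre leg, so $\iota_{\tilde X}\varphi_0$ and $\iota_{\tilde X}F(\varphi_0)$ both restrict to zero on the fibres; neither closedness hypothesis directly produces the fibrewise $2$-forms whose periods you need, which is consistent with your having already extracted everything those hypotheses give.

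The missing ingredient, which is how the paper proceeds, is Hitchin's dual period construction \cite{hitchin1997}: one introduces the dual period matrix $\mu_{ij}=\int_{B_i}*_y(-\ud x^j)$ of the fibrewise harmonic $2$-forms over a basis $\{B_i\}$ of $H_2(L,\zz)$, shows that the $1$-forms $\sum_j\mu_{ij}\,\ud y^j$ are closed on $B$ (the analogue here of the closedness of $\iota_X\,\mathrm{Im}\,\Omega$ in the special Lagrangian setting, and exactly where the information beyond your divergence identity enters), and thereby obtains dual affine coordinates $v_i$ with $\ud v_i=\sum_j\mu_{ij}\,\ud y^j$. Since $\partial v_i/\partial u_j=g_B(\partial_{u_i},\partial_{u_j})$ is symmetric because $g_B$ is a metric, the map $u\mapsto(v_i)$ is locally a gradient and $g_B$ is Hessian; equivalently, $(u,v):B\to H^1(L,\rr)\times H^2(L,\rr)$ is a Lagrangian isometric immersion. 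In other words, the fibre-integral identity you defer to the write-up is not a finishing touch but the entire content of the theorem, and the proposal as written does not supply it.
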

\begin{proof}
We prove this theorem following Hitchin's paper \cite{hitchin1997}. As $H_2(L,\zz)\cong H^1(L,\zz)$ canonically, we denote by $\{B_1,B_2,B_3\}$ the image of $\{\alpha^1,\alpha^2,\alpha^3\}$ in $H_2(L,\zz)$ under this identification. We can form the dual period matrix
\[\mu_{ij}=\int_{B_i}*_y(-\ud x^j),\]
where $*_y$ is the Hodge star operator on $L_y$ with respect to the canonically induced metric. In like manner, we can prove that there exist locally defined functions $v_1,v_2,v_3$ such that
\[\ud v_i=\sum_j\mu_{ij}\ud y^j\]
for $i=1,2,3$. Let $\{\beta^1,\beta^2,\beta^3\}$ be the dual basis of $\{B_1,B_2,B_3\}$ in $H^2(L,\zz)$, then the map
\[v=v_1\beta^1+v_2\beta^2+v_3\beta^3:B\to H^2(L,\rr).\]
It follows that $v$ does not depends on the choice of the basis and is well-defined up to an additive constant. Consider the map $F:B\to H^1(L,\rr)\times H^2(L,\rr)$ given by
\[y\mapsto (u(y),v(y)).\]
The vector space $H^1(L,\rr)\times H^2(L,\rr)$ is equipped with a symplectic form $\Omega$ and a split metric $G$ defined as follows:
\[\begin{split}
\Omega((\alpha^1,\beta^1),(\alpha^2,\beta^2))&:=\int_L\alpha^1\wedge\beta^2-\int_L\alpha^2\wedge\beta^1,\\
G((\alpha^1,\beta^1),(\alpha^1,\beta^1))&:=\int_L\alpha^1\wedge\beta^1.
\end{split}\]
The same argument in \cite{hitchin1997} shows that $F:(B,g_B)\to(H^1(L,\rr)\times H^2(L,\rr),G,\Omega)$ is a locally isometric Lagrangian immersion and that the metric $g_B$ is a Hessian metric with respect to the affine coordinates $\{u_1,u_2,u_3\}$ and its dual $\{v_1,v_2,v_3\}$.
\end{proof}
    
This theorem does not tell us if the local potential for the Hessian metric on $B$ solve a nice real Monge-Amp\`ere equation. In fact, we have the following theorem:
\begin{thm}
The local potentials for the Hessian metric $g_B$ with respect to the affine structures defined in Theorem \ref{affine} solve a real Monge-Amp\`ere equation with constant right hand side if and only if the torus fibers of $\pi:M^\circ\to B$ have the same volume.
\end{thm}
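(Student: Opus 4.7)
The strategy is to compute $\det G$ explicitly, where $G=(G_{ij})$ is the matrix of $g_B$ in the affine coordinates $\{u_i\}$ of Theorem \ref{affine}, and to prove the identity $\det G = \vol(L_y)$. Since $g_B$ is Hessian in these coordinates, $G_{ij} = \partial^2\phi/\partial u_i\partial u_j$ for a local potential $\phi$, so the real Monge--Amp\`ere equation with constant right hand side becomes $\det G = \mathrm{const}$, which then reduces exactly to the condition that the fiber volume $\vol(L_y)$ is constant on $B$.

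The first step is to work in a local Darboux chart $\{x^1,y^1,x^2,y^2,x^3,y^3\}$ as in Remark \ref{foliationcoor}, so that $\omega=\sum_j \ud x^j\wedge \ud y^j$ and $L_y=\{y=\mathrm{const}\}$. By Theorem \ref{sf} every fiber is a flat torus, and $\{x^j\}$ are affine coordinates for the Hessian connection $D$ on $L_y$; consequently the fiber metric coefficients $h_{jk}$ depend only on $y$. Using $l_y(\partial/\partial y^j)=-\ud x^j$ one obtains
\[
g_B\!\left(\frac{\partial}{\partial y^j},\frac{\partial}{\partial y^k}\right) = \int_{L_y} \langle \ud x^j,\ud x^k\rangle_h\, \vol_{L_y} = h^{jk}(y)\sqrt{\det h(y)}\, V(y),
\]
where $V(y):=\int_{L_y} \ud x^1 \wedge \ud x^2 \wedge \ud x^3$ is the Euclidean volume of the lattice fundamental domain in $\rr^3_x$, so $\vol(L_y)=V(y)\sqrt{\det h(y)}$.

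By construction of the $u_i$ the Jacobian $\partial u/\partial y$ is the period matrix $\lambda$, and since the lattice generators $A_i$ have coordinates $(A_i)_j = -\lambda_{ij}$ in $\rr^3_x$, one has $V(y)=|\det\lambda|$. Converting the metric to $u$-coordinates yields the matrix identity
\[
G = V\sqrt{\det h}\cdot (\lambda\, h\,\lambda^T)^{-1},
\]
whence
\[
\det G = \frac{(V\sqrt{\det h})^3}{(\det \lambda)^2\, \det h} = V\sqrt{\det h} = \vol(L_y).
\]
The cancellation is clean: three factors of $V$ in the numerator meet two factors of $V$ from $(\det\lambda)^2$, and one factor of $\det h$ in the denominator trims $(\det h)^{3/2}$ down to $(\det h)^{1/2}$. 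The corresponding statement for the dual affine structure determined by $\nabla^B$ follows by symmetry, since the Riemannian volume form is parallel under both $D$ and $\nabla^B$.

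The main obstacle is organizational rather than conceptual: one must recognize the same $\lambda$ as both the Jacobian of the coordinate change $y\mapsto u$ and the period matrix encoding the fiber lattice, and invoke Theorem \ref{sf} to commute fiber integration with the purely algebraic expression in $h$. Once these two ingredients are in place, the determinant identity $\det G = \vol(L_y)$ immediately gives the claimed equivalence.
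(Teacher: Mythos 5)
Your proof is correct, and at its core it is the same computation as the paper's: both express the matrix of $g_B$ in the affine coordinates $\{u_i\}$ via the period matrix $\lambda$ and reduce the Monge--Amp\`ere condition to constancy of a determinant. The difference is in the bookkeeping: the paper introduces the dual period matrix $\mu_{ij}=\int_{B_i}*_y(-\ud x^j)$, obtains $\det G=\det\mu\cdot\det\lambda^{-1}$, identifies this with $\vol\left(H^1(L_y,\rr/\zz)\right)^2$ for the $L^2$-metric on the dual torus, and finishes by noting that the fibers have constant volume iff their dual tori do; you instead exploit the flatness of the fibers (Theorem \ref{sf}) to write the fiber metric as a constant matrix $h$ in the $x$-coordinates and land directly on the sharper identity $\det G=\vol(L_y)$, with no detour through the dual torus. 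The two answers are consistent, since with the $L^2$-normalization one has $\vol(H^1(L_y,\rr/\zz))=\vol(L_y)^{1/2}$ in dimension three, and your determinant computation checks out. One small slip that does not affect the argument: the coordinates $\{x^j\}$ of the Darboux chart are affine for the Bott connection $\nabla^B$ rather than for $D$, but since the compact leaves are flat tori the two connections coincide with the Levi-Civita connection there, so $h_{jk}$ is indeed constant along each fiber as you use.
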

\begin{proof}
From the definition of metric $g_B$ and the periods $\lambda_{ij}$, $\mu_{ij}$, we have that
\[g_B\left(\frac{\p}{\p y^i},\frac{\p}{\p y^j}\right)=\sum_s\lambda_{si}\mu_{sj}.\]
On the other hand, the affine coordinates $u_i$ is defined by
\[\frac{\p u_i}{\p y^j}=\lambda_{ij},\]
so we conclude that
\[g_B\left(\frac{\p}{\p u_i},\frac{\p}{\p u_j}\right)=\mu_{is}\lambda^{sj},\]
hence
\[\det\left[g_B\left(\frac{\p}{\p u_i},\frac{\p}{\p u_j}\right)\right]=\det\mu\cdot\det\lambda^{-1}.\]
A similar calculation indicates that
\[\vol\left(H^1(L_y,\rr/\zz)\right)=\sqrt{\det\mu\cdot\det\lambda^{-1}}.\]
The theorem follows from the fact that the fibers $L_y$ have the same volume if and only if their dual tori $H^1(L_y,\rr/\zz)$ have the same volume.
\end{proof}

Let $\vol(y)$ be the volume of the fiber $L_y$, which can be viewed as a smooth function on $B$. The fibration $\pi:M^\circ\to B$ induces a canonical isomorphism of vector bundles $\ud\pi:TM^\circ/\cll\cong\pi^*TB$ over $M^\circ$. Therefore it is desirable to compare two naturally defined Riemannian metrics $q(\cdot,\cdot):=\omega(\cdot,K\cdot)$ and $\pi^*g_B$ on $TM^\circ/\cll\cong\pi^*TB$.

\begin{thm}\label{GH}
There is an isometry $\ud\pi:(TM^\circ/\cll,q)\cong(\pi^*TB,\pi^*(\vol^{-1}g_B))$.
\end{thm}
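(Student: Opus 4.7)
The plan is to verify the identity pointwise in Darboux coordinates adapted to the Lagrangian fibration. Fix a point $p\in M^\circ$ with $y=\pi(p)$, and by Remark \ref{foliationcoor} choose Weinstein--Darboux coordinates $\{x^1,y^1,x^2,y^2,x^3,y^3\}$ near $p$ so that $\omega=\sum_j\ud x^j\wedge\ud y^j$ and the fibers of $\pi$ are the level sets of $y=(y^1,y^2,y^3)$. Then $\cll=\spann\{\p/\p x^j\}$, and writing $K(\p/\p y^j)=K_j^i\,\p/\p x^i$ for a matrix-valued function $(K_j^i)$, a direct $\omega$-computation gives
\[q_{ij}:=q(\p/\p y^i,\p/\p y^j)=-K_j^i,\qquad h_{ij}:=g_L(\p/\p x^i,\p/\p x^j)=-(K^{-1})_i^j,\]
where for the second identity one lifts $K^{-1}(\p/\p x^i)=(K^{-1})_i^k\,\p/\p y^k$ modulo $\cll$. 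Since $-K$ is symmetric and positive definite by Proposition \ref{cases}(4), the two matrices are reciprocal; that is, $q_{ij}=h^{ij}$ at $p$, where $h^{ij}$ denotes the inverse of the matrix $(h_{ij})$.

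The second step is to exploit flatness of the fibers. By Theorem \ref{sf}, $(L_y,g_L)$ is a flat torus, so the Ricci formula $\ric(X,X)=\tfrac14|D_Xg_L|^2$ from that theorem forces $Dg_L=0$ on a flat leaf. Combined with the torsion-freeness of $D$ from Theorem \ref{structure}, this identifies $D$ with the Levi-Civita connection $\nabla$ of $g_L$, and the duality $D+\nabla^B=2\nabla$ then yields $\nabla^B=\nabla$. Hence the $\nabla^B$-parallel frame $\{\p/\p x^j\}$ is also Levi-Civita parallel on $L_y$, so the components $h_{ij}$ are constant along $L_y$, and each $\ud x^j$ is a parallel (and thus harmonic) 1-form on $(L_y,g_L)$.

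With this in hand, $l_y(\p/\p y^j)=-\ud x^j$ is the harmonic representative, and since $h^{ij}(y)$ is constant on the fiber the $L^2$-definition of $g_B$ gives
\[g_B(\p/\p y^i,\p/\p y^j)=\int_{L_y}\langle-\ud x^i,-\ud x^j\rangle_{g_L}\,\vol_{L_y}=h^{ij}(y)\,\vol(y)=q_{ij}(p)\,\vol(y).\]
Since $\ud\pi$ sends $\p/\p y^j$ at $p$ to $\p/\p y^j$ at $y$ and the above holds at every $p\in M^\circ$, this is exactly the identity $q=\pi^*(\vol^{-1}g_B)$, establishing the theorem. The main obstacle is the middle step: without the identification $D=\nabla^B=\nabla$ on a flat leaf, $\ud x^j$ would merely be $\nabla^B$-parallel (the representative used for cohomology) rather than metric-harmonic, and the coefficients $h^{ij}$ would not be constant along the fiber—both being essential for the clean factorization $g_B=\vol\cdot q$. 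The remainder is linear algebra in a single Darboux chart.
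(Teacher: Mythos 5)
Your proof is correct and follows essentially the same route as the paper: compute $q(\p/\p y^i,\p/\p y^j)$ and $g_B(\p/\p y^i,\p/\p y^j)$ in a Weinstein--Darboux chart, identify both with $g_L(\ud x^i,\ud x^j)=h^{ij}$ up to the factor $\vol(y)$ coming from integrating a fiberwise-constant quantity, and conclude. Your middle step spelling out why flatness of the compact leaf forces $Dg_L=0$, hence $D=\nabla^B=\nabla$ and constancy of $h^{ij}$ along the fiber, is a useful elaboration of what the paper compresses into the single remark that ``$g_L$ is flat hence $g_L(\ud x^i,\ud x^j)$ is constant on $L_y$.''
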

\begin{proof}
Previously we have calculated that
\[g_B\left(\frac{\p}{\p y^i},\frac{\p}{\p y^j}\right)=\int_{L_y}g_L(\ud x^i,\ud x^j)\vol_{g_L}=g_L(\ud x^i,\ud x^j)\cdot\vol(y).\]
The last equality holds because $g_L$ is flat hence $g_L(\ud x^i,\ud x^j)$ is a constant on $L_y$. On the other hand, we know that
\[q\left(\frac{\p}{\p y^i},\frac{\p}{\p y^j}\right)=g_L\left(K\frac{\p}{\p y^i},K\frac{\p}{\p y^j}\right)=g_L(\ud x^i,\ud x^j).\]
So we conclude that $(\ud\pi)_*q=\pi^*(\vol^{-1}g_B)$ as desired.
\end{proof}

\section{An Example}

In this section, we investigate a nontrivial example of wonderful degeneration in detail.

Consider an elliptic-fibered K3 surface $\pi:(X,I)\to\ccc\pp^1$, where $I$ is a fixed complex structure on the K3 surface $X$ such that the fibration $\pi$ is holomorphic. Let $\Omega_I=\omega_J+\sqrt{-1}\omega_K$ be a holomorphic volume form on $(X,I)$. By Yau's theorem, we can choose a suitable Ricci-flat K\"ahler metric $\omega_I$ on $X$ such that $(\omega_I,\omega_J,\omega_K)$ form a hyperk\"ahler triple. In particular, we have that $\omega_I^2=\omega_J^2=\omega_K^2$ and $\omega_I\wedge\omega_J=\omega_J\wedge\omega_K=\omega_K\wedge\omega_I=0$.

For every $t>0$, let $\omega_t$ be the unique Ricci-flat K\"ahler metric in the K\"ahler class $\pi^*[\omega_{FS}]+t[\omega_I]$ with respect to the complex structure $I$, where $\omega_{FS}$ is a Fubini-Study metric on $\ccc\pp^1$. Let $c$ be the positive number such that
\[\int_X\omega_I\wedge\pi^*\omega_{FS}=c\cdot\int_X\omega_I^2.\]
It is straightforward to verify that $\left(\dfrac{\omega_t}{\sqrt{t^2+2ct}},\omega_J,\omega_K\right)$ also form a hyperk\"ahler triple in the sense above. Therfore, there exists a complex structure $K_t$ on $X$ such that its holomorphic volume form is given by
\[\Omega_{K_t}=\dfrac{\omega_t}{\sqrt{t^2+2ct}}+\sqrt{-1}\omega_J.\]
Consider the standard torus $(T^2=\rr^2/\zz^2,\ud x\wedge\ud y)$ with the complex structure defined by the 1-form $\sqrt{t^2+2ct}\cdot\ud x+\sqrt{-1}\ud y$. The product symplectic manifold$M:=(X,\omega_K)\times(T^2,\ud x\wedge\ud y)$ is equipped with a holomorphic volume form
\[\Omega_t=\left(\dfrac{\omega_t}{\sqrt{t^2+2ct}}+\sqrt{-1}\omega_J\right)\wedge(\sqrt{t^2+2ct}\cdot\ud x+\sqrt{-1}\ud y)\]
with
\[\varphi_t=\omega_t\wedge\ud x-\omega_J\wedge\ud y.\]
\textbf{Claim}: The family $\{\Omega_t\}_{0<t\leq 1}$ is as a wonderful degeneration of Calabi-Yau structures on $(M,\omega_K+\ud x\wedge\ud y)$.

It is obvious that $\{\Omega_t\}$ is a family of Ricci-flat Calabi-Yau structures by its construction. Therefore we are more concerned with its convergence property. Let $S$ be the union of singular fibers in the elliptic fibration $\pi:X\to\ccc\pp^1$. It was proved by Tosatti \cite{tosatti2010} that there is a smooth convergence $\lim_{t\to 0}\omega_t=\omega_0$ on $X\setminus S$, where $\omega_0$ is the pullback of a K\"ahler metric on $\ccc\pp^1\setminus\pi(S)$. As a consequence, if we set $M^\circ=(X\setminus S)\times T^2$, which is an open and dense subset of $M$, we have the smooth convergence over $M^\circ$:
\[\lim_{t\to 0}\varphi_t=\omega_0\wedge\ud x-\omega_J\wedge\ud y=:\varphi_0.\]
The only thing to check is whether $\varphi_0$ belongs to the orbit $\clo_0^+$ pointwise. Near any point in $X\setminus S$, the fibration $\pi:X\to\ccc\pp^1$ is a holomorphic submersion, hence we can choose a local holomorphic coordinate $(z_1,z_2)$ on $X$ such that 
\[\Omega_I=\omega_J+\sqrt{-1}\omega_K=\ud z_1\wedge\ud z_2\]
and that $\pi$ is given by the second coordinate projection $(z_1,z_2)\mapsto z_2$ where $z_2$ is a local coordinate on $\ccc\pp^1$. If we write $z_j=x_j+\sqrt{-1}y_j$ for $j=1,2$, then we obtain the following local expression for $\varphi_0$:
\[\varphi_0=f\ud x_2\wedge\ud y_2\wedge\ud x-(\ud x_1\wedge\ud x_2-\ud y_1\wedge\ud y_2)\wedge\ud y,\]
where $f$ is a locally defined positive function of $z_2$. Direct calculation shows that
\[\ker K_{\varphi_0}=\spann\left\{\frac{\p}{\p x_1},\frac{\p}{\p y_1},\frac{\p}{\p x}\right\}\]
is 3-dimensional. Hence $\varphi_0$ belongs to the orbit $\clo_0^+$ pointwise. Further computation shows that
\[F(\varphi_0)=4f\ud x_2\wedge\ud y_2\wedge\ud y.\]

Now let us examine the geometry of $(M^\circ,\omega,\varphi_0)$. Using the local coordinates above, we know
\[\omega=\ud x_1\wedge\ud y_2+\ud y_1\wedge\ud x_2+\ud x\wedge\ud y\]
and
\[K\left(\frac{\p}{\p x_2}\right)=-2f\frac{\p}{\p y_1},\quad K\left(\frac{\p}{\p y_2}\right)=-2f\frac{\p}{\p x_1},\quad K\left(\frac{\p}{\p y}\right)=-2\frac{\p}{\p x}.\]
In this case, the Lagrangian foliation $\cll$ comes from a Lagrangian fibration which is the product of $\pi:X\setminus S\to\ccc\pp^1\setminus\pi(S)$ and $T^2=S^1\times S^1\to S^1$, where the second factor is simply the projection to the later copy of $S^1$. The induced Riemannian metric $g_\cll$ is
\[g_\cll=\frac{1}{2f}(\ud x_1\otimes\ud x_1+\ud y_1\otimes\ud y_1)+\frac{1}{2}\ud x\otimes\ud x,\]
which is automatically semi-flat. Direct computation also shows that
\[|\varphi_t|^2=4\sqrt{t^2+2ct}\sim4\sqrt{2c}\cdot t^{1/2}\to 0.\]

Let $q_t$ denote the Riemannian metric $q_t(\cdot,\cdot)=\omega(\cdot,K(\varphi_t)\cdot)$ on $M$. As predicted by Gross-Wilson and Kontsevich-Soibelman-Todorov, Theorem \ref{GH} implies that the Gromov-Hausdorff limit of $(M,q_t)$ is $\ccc\pp^1\times S^1$ equipped with a metric whose regular part is simply $\vol^{-1}\cdot g_B$ on $B=(\ccc\pp^1\setminus\pi(S))\times S^1$. 
 
\bibliographystyle{alpha}

\bibliography{C:/Users/benja/Dropbox/Documents/Source}

\end{document}